\newcommand{\field}[1]{\mathbb{#1}}
\newcommand{\CC}{\field{C}}
\newcommand{\FF}{\field{F}}
\newcommand{\NN}{\field{N}}
\newcommand{\QQ}{\field{Q}}
\newcommand{\Aa}{\mathcal A}
\newcommand{\Bb}{\mathcal B}
\newcommand{\Dd}{\mathcal D}
\newcommand{\Ii}{\mathcal I}
\newcommand{\Kk}{\mathcal K}
\newcommand{\Ll}{\mathcal L}
\newcommand{\Oo}{\mathcal O}
\newcommand{\Tt}{\mathcal T}
\newcommand{\NO}[1]{\operatorname{\mathcal{NO}}_{\!#1}}
\newcommand{\NT}{\operatorname{\mathcal{NT}}}
\newcommand{\reduced}{r}
\newcommand{\normal}{n}
\newcommand{\id}{\operatorname{id}}
\newcommand{\Ind}{\operatorname{Ind}}
\newcommand{\clsp}{\operatorname{\overline{span\!}\,\,}}
\newcommand{\fp}{FESSPE}
\newcommand{\Tc}{\Tt_{\rm cov}}
\theoremstyle{plain}
\newtheorem{theorem}{Theorem}[section]
\newtheorem*{theorem*}{Theorem}
\newtheorem*{prop*}{Proposition}
\newtheorem{cor}[theorem]{Corollary}
\newtheorem{lemma}[theorem]{Lemma}
\newtheorem{prop}[theorem]{Proposition}
\theoremstyle{remark}
\newtheorem{rmk}[theorem]{Remark}
\newtheorem{example}[theorem]{Example}
\theoremstyle{definition}
\numberwithin{equation}{section}
\theoremstyle{plain}
\newtheorem{thm}[theorem]{Theorem}
\newtheorem{lem}[theorem]{Lemma}
\theoremstyle{definition}
\newtheorem{defn}[theorem]{Definition}
\newtheorem{ex}[theorem]{Example}
\newtheorem*{notn*}{Notation}
\newtheorem*{hyp*}{Hypothesis}
\theoremstyle{remark}
\newtheorem{rem}[theorem]{Remark}
\newcommand{\thmref}[1]{Theorem~\textup{\ref{#1}}}
\newcommand{\corref}[1]{Corollary~\textup{\ref{#1}}}
\newcommand{\lemref}[1]{Lemma~\textup{\ref{#1}}}
\newcommand{\midtext}[1]{\quad\text{#1}\quad}
\newcommand{\Chi}{\raisebox{2pt}{\ensuremath{\chi}}}
\DeclareMathOperator{\ad}{Ad}
\newcommand{\case}{& \text{if }}
\newcommand{\ifnot}{& \text{otherwise}}
\renewcommand{\iff}{\ensuremath{\Leftrightarrow}}
\newcommand{\minus}{\setminus}
\newcommand{\inv}{^{-1}}
\renewcommand{\bar}{\overline}
\newcommand{\wilde}{\widetilde}
\begin{document}

\title[Inner coactions, Fell bundles, and abstract uniqueness]{Inner coactions, Fell bundles, and abstract uniqueness theorems}
\author[S. Kaliszewski]{S. Kaliszewski}
\address{School of Mathematical and Statistical Sciences, Arizona State University, Tempe, Arizona, 85287}
\email{kaliszewski@asu.edu}
\author[N.S. Larsen]{Nadia S. Larsen}
\address{Department of Mathematics, University of Oslo, PO BOX 1053 Blindern, N-
0316 Oslo, Norway.}
\email{nadiasl@math.uio.no}
\author[J. Quigg]{John Quigg}
\address{School of Mathematical and Statistical Sciences, Arizona State University, Tempe, Arizona, 85287}
\email{quigg@asu.edu}

\thanks{This research was supported by the Research Council of Norway and the
NordForsk Research Network ``Operator Algebra and Dynamics'' (grant \#11580).}

\subjclass[2010]{Primary 46L05, 46L55}

\keywords{Fell bundle, inner coaction, quasi-lattice ordered group, product system}

\begin{abstract}{We prove gauge-invariant uniqueness theorems with respect to maximal and normal coactions
for $C^*$-algebras associated to product systems of $C^*$-correspondences. Our techniques of proof are developed in the abstract context of
Fell bundles. We employ inner coactions to prove an essential-inner uniqueness
theorem for Fell bundles. As application, we characterise injectivity of homomorphisms on Nica's Toeplitz algebra
$\Tt(G, P)$ of a quasi-lattice ordered group $(G, P)$ in the presence of a finite non-trivial
set of lower bounds for all non-trivial elements in $P$.}
\end{abstract}

\date{August 24, 2011. Revised April 5, 2012}
\maketitle

\section{Introduction}\label{intro}

Starting with the early constructions of $C^*$-algebras associated to generating families of operators on Hilbert space
such as isometries or partial isometries, possibly subject to certain relations,
a question of interest arose as to whether the $C^*$-algebra was unique. Coburn's theorem asserts that the $C^*$-algebra
generated by a non-unitary isometry on Hilbert space is unique up to isomorphism, \cite{Co}. In \cite{Cu}, Cuntz constructed
large classes of $C^*$-algebras, both simple and non-simple, generated by families of isometries satisfying certain relations, and proved that
two tuples of isometries on Hilbert space fulfilling the same relation generate isomorphic $C^*$-algebras.

In a remarkable generalization, Nica introduced the notion of a quasi-lattice ordered group $(G, P)$ and constructed a Toeplitz
$C^*$-algebra $\Tt(G, P)$ and a universal $C^*$-algebra $C^*(G, P)$, \cite{N}. He obtained both analogues of Coburn's theorem, and
results relating to Cuntz's uniqueness theorems in the particular case of the quasi-lattice ordered group
$(\FF_n, \FF_n^+)$ consisting of the free group and the free semigroup on $n$ generators. Laca and Raeburn \cite{LacR1} discovered a
semigroup crossed product structure of $C^*(G, P)$, and used it to prove faithfulness results for representations of this
algebra in the presence of an amenability hypothesis.

Our starting point is two-fold. For one thing, we noticed that the analysis of the gauge-invariant uniqueness
property from \cite{CLSV} involved two crucial ingredients of nonabelian duality, namely \emph{maximal} and \emph{normal} coactions. The
second motivating fact was that the quasi-lattice ordered group $(\FF_n, \FF_n^+)$ belongs to the class of
those $(G, P)$ for which, as Nica showed,
$\Tt(G, P)$ contains $\Kk(l^2(P))$. We could see that for such pairs $(G, P)$, the ideal $\Kk(l^2(P))$ of $\Tt(G, P)$
contains a family of projections that determines an \emph{inner} coaction.

Our thrust in this paper is to show how the general theory of coactions gives uniqueness theorems for
$C^*$-algebras of Fell bundles in a systematic manner. We  apply these results in familiar contexts
with sharpened or new characterizations of uniqueness as outcomes.

The first gauge-invariant uniqueness type result was proved by an Huef and Raeburn in \cite{anHR}.
Here we obtain a gauge-invariant uniqueness result in the context of Fell bundles. Since the proofs of the
abstract gauge-invariant uniqueness results for
$C^*$-algebras of Fell bundles are painless, albeit non-trivial, we chose to place this
material in an appendix. The other type of abstract uniqueness results we prove emerges from inner coactions.

The first main application is to establish a gauge-invariant uniqueness property  for
the Cuntz-Nica-Pimsner algebra $\NO{X}$ of Sims and Yeend
from \cite{SY} by highlighting  the feature observed in \cite{CLSV} that it
carries a maximal coaction. If $(G, P)$ is a quasi-lattice ordered
group and $X$ is a compactly aligned
product system over $P$ of $C^*$-correspondences over a $C^*$-algebra $A$, then Sims and Yeend's
$C^*$-algebra $\NO{X}$  is universal for Cuntz-Nica-Pimsner covariant representations
of $X$. When $X$ is $\tilde{\phi}$-injective, $\NO{X}$ has the desired property of admitting an injective universal Cuntz-Nica-Pimsner
covariant representation. For product systems, $\NO{X}$ is the appropriate candidate for
the Cuntz-Pimsner algebra $\Oo_Y$ associated in \cite{Ka} to
 a single $C^*$-correspondence $Y$, in a generalization of Pimsner's work from \cite{P}.

The gauge-invariant
uniqueness property for $\NO{X}$ proved in \cite{CLSV} (see Corollaries 4.11 and 4.12) is
equivalent to asking for the canonical maximal coaction on $\NO{X}$ to be normal. In our
treatment here we  look at the gauge-invariant uniqueness property in two separate
classes, that of $C^*$-algebras with maximal coactions, and of $C^*$-algebras
with normal coactions. Thereby we are in the context of coactions and
can streamline the proofs by using specific techniques.  We obtain gauge-invariant uniqueness
theorems for $\NO{X}$, seen in the category of maximal coactions, and
for the co-universal algebra $\NO{X}^\reduced$ identified in \cite{CLSV} and viewed in the
category of normal coactions.

As a bonus for sorting out abstract gauge-invariant uniqueness results for Fell bundles,
we also obtain  a
gauge-invariant uniqueness theorem for the Toeplitz-like extension of $\NO{X}$.
This is the universal $C^*$-algebra for Nica covariant Toeplitz representations of
the compactly aligned product system $X$; this algebra was denoted $\Tc(X)$ in \cite{F}, but
we shall follow \cite{BanHLR}, see their Remark 5.3, and use the notation $\NT(X)$.

Faithfulness of representations of $\Tt(G, P)$ was characterized by Laca and Raeburn for all
amenable quasi-lattice ordered groups $(G, P)$, \cite{LacR1}. In coaction terminology, $(G, P)$ amenable means
that the canonical maximal coaction on $C^*(G, P)$ is also normal.  Here we exploit the fact that $\Tt(G, P)$ has a natural normal coaction. For a
quasi-lattice ordered group $(G, P)$ with the property that there is a finite set $F$ of elements in
$P\setminus\{e\}$ such that every non-trivial element in $P$ has a  lower bound in $F$, we characterize
directly injectivity of homomorphisms from $\Tt(G, P)$ to a $C^*$-algebra $B$. The crucial observation is that existence
of $F$ not only characterizes the fact that $\Kk(l^2(P))$ is included in $\Tt(G, P)$, as proved by Nica
in \cite[Proposition 6.3]{N}, but that it also characterizes existence of an inner coaction
on the ideal $\Kk(l^2(P))$. With this card at hand, we can apply our abstract essential-inner uniqueness result, i.e.
Corollary~\ref{cor:eiut}. For the pair $(\FF_n, \FF_n^+)$, which clearly admits a finite set of lower bounds for elements
in $\FF_n^+$, our Theorem~\ref{thm:describe-inner-Nica-alg} thus provides a characterization of faithful representations
of $\Tt(\FF_n, \FF_n^+)$ without reference to the amenability of the pair, a property that is by no means trivial to verify.

The organization of the paper is as follows: after a preliminary section in which we recall terminology and facts about
coactions, quasi-lattice ordered groups and $C^*$-algebras of product systems, in section~\ref{NO} we present gauge-invariant
uniqueness theorems for the Nica-Toeplitz algebra, the Cuntz-Nica-Pimsner algebra and the co-universal $C^*$-algebra
of a class of compactly aligned product systems $X$. In section~\ref{section:inner} we prove the abstract inner-uniqueness and
essential-inner uniqueness results. In section~\ref{subsect:Nica-alg-semigp-cp} we place the representation of
$C^*(G, P)$ arising from the Toeplitz representation of $P$ in the framework of nonabelian duality.
Section~\ref{section:fessub} contains the essential-inner uniqueness theorem for
$\Tt(G, P)$, namely  Theorem~\ref{thm:describe-inner-Nica-alg}, and a converse to it, Theorem~\ref{thm:inner-to-FES}. The appendix
collects the promised gauge-invariant uniqueness results for Fell bundles.

\section{Preliminaries}
\label{prelim}

Throughout, $G$ will be a discrete group.
If $A$ is a $C^*$-algebra and $\delta:A\to A\otimes C^*(G)$ is a coaction, we will just say ``$(A,\delta)$ is a coaction''.
For the theory of coactions we refer to \cite[Appendix~A]{BE}, and for discrete coactions in particular we refer to \cite{EKQ, QuiggDiscrete}.
For maximalizations and normalizations of coactions we refer to \cite{clda, nordfjordeid}.

If $(A,\delta)$ is a (full\footnote{and all our coactions will be full}) coaction of $G$, we will let $\Aa$ denote the associated Fell bundle, and similarly for other capital letters.
If $\pi:(A,\delta)\to (B,\varepsilon)$ is a morphism of coactions, we write $\tilde{\pi}:\Aa \to \Bb$ for the corresponding homomorphism of
Fell bundles. Note that $\pi$ is surjective
if and only if $\{\pi(A_s):s\in G\}$ generates $B$.
Also note that if $(A,\delta)$ and $(B,\varepsilon)$ are coactions, then  a homomorphism $\pi:A\to B$ is $\delta-\varepsilon$ equivariant if and only if $\pi(A_s)\subset B_s$ for all $s\in G$ (because equivariance can be checked on the generators $a_s\in A_s$ for $s\in G$).

A morphism $\pi:(B,\varepsilon)\to (A,\delta)$ of coactions is a \emph{maximalization} of $(A,\delta)$ if $(B,\varepsilon)$ is maximal and $\pi\times G:B\times_\varepsilon G\to A\times_\delta G$ is an isomorphism.
Sometimes we call $(B,\varepsilon)$ itself a maximalization of $(A,\delta)$.
Maximalizations of $(A,\delta)$ always exist, and all are uniquely isomorphic.
Choosing one for every coaction, we get a \emph{maximalization functor} that sends $(A,\delta)$ to the maximalization
\[
q_A^m:(A^m,\delta^m)\to (A,\delta),
\]
and sends a morphism $\pi:(A,\delta)\to (B,\epsilon)$ to the unique morphism $\pi^m$, called the \emph{maximalization} of $\pi$, making the diagram
\[
\xymatrix@C+30pt{
(A^m,\delta^m) \ar@{-->}[r]^-{\pi^m}_-{!} \ar[d]_{q_A^m}
&(B^m,\epsilon^m) \ar[d]^{q_B^m}
\\
(A,\delta) \ar[r]_-\pi
&(B,\epsilon)
}
\]
commute.
A parallel theory exists for normalizations:
$\pi:(A,\delta)\to (B,\varepsilon)$ is a \emph{normalization} of $(A,\delta)$ if $(B,\varepsilon)$ is normal and $\pi\times G:A\times_\delta G\to B\times_\varepsilon G$ is an isomorphism.
We sometimes call $(B,\varepsilon)$ itself a normalization of $(A,\delta)$.
Normalizations of $(A,\delta)$ always exist, and all are uniquely isomorphic.
Choosing one for every coaction, we get a \emph{normalization functor} that sends $(A,\delta)$ to the normalization
\[
q_A^n:(A,\delta)\to (A^n,\delta^n),
\]
and sends a morphism $\pi:(A,\delta)\to (B,\epsilon)$ to the unique morphism $\pi^n$, called the \emph{normalization} of $\pi$, making the diagram
\[
\xymatrix@C+30pt{
(A,\delta) \ar[r]^-\pi \ar[d]_{q_A^n}
&(B,\epsilon) \ar[d]^{q_B^n}
\\
(A^n,\delta^n) \ar@{-->}[r]_-{\pi^n}^-{!}
&(B^n,\epsilon^n)
}
\]
commute.
Maximalizations and normalizations are automatically surjective.
Moreover, if $\pi:(A,\delta)\to (B,\varepsilon)$ is either a maximalization or a normalization, then $\pi$ maps each \emph{spectral subspace} $A_s:=\{a\in A:\delta(a)=a\otimes s\}$ isometrically onto the corresponding subspace $B_s$, and in particular maps the \emph{fixed-point algebra} $A^\delta:=A_e$ isomorphically onto $B^\epsilon$.
If $(A,\delta)$ is normal, then the maximalization $q_A^m:(A^m,\delta^m)\to (A,\delta)$ is also a normalization of $(A^m,\delta^m)$,
and similarly if $(A,\delta)$ is maximal then the normalization $q_A^n:(A,\delta)\to (A^n,\delta^n)$ is also a maximalization of $(A^n,\delta^n)$.

For every Fell bundle $p:\Aa\to G$, the (full) cross-sectional algebra $C^*(\Aa)$ carries a maximal coaction $\delta_\Aa$, determined on $\Aa$ by $\delta_\Aa(a)=a\otimes p(a)$,
the reduced cross-sectional algebra $C^*_r(\Aa)$ carries a normal coaction $\delta_\Aa^n$ determined by the same formula,
and the \emph{regular representation} $\Lambda_\Aa:(C^*(\Aa),\delta_\Aa)\to (C^*_r(\Aa),\delta_\Aa^n)$ is both a maximalization and a normalization.

For $s\in G$, we write $\Chi_s$ for the characteristic function of $\{s\}$, viewed as an element of $B(G)=C^*(G)^*$.
If $(A,\delta)$ is a coaction, we write
\begin{equation}\label{def:delta_s}
\delta_s=(\id\otimes\Chi_s)\circ\delta,
\end{equation}
which is a projection of norm one from $A$ onto the spectral subspace $A_s$.

\vskip 0.2cm
If $A$ is a $C^*$-algebra and $P$ is a discrete semigroup with identity $e$, a
\emph{product system over $P$ of $C^*$-correspondences over $A$} consists
of a semigroup $X$ equipped with a semigroup homomorphism $d
\colon X \to P$ such that: (1)~$X_p := d^{-1}(p)$ is a
$C^*$-correspondence over $A$ for each $p\in P$; (2)~$X_e =
{_A}A_A$; (3)~the multiplication on $X$ implements isomorphisms
$X_p \otimes_A X_q \cong X_{pq}$ for $p,q \in P \setminus
\{e\}$; and (4)~multiplication implements the right and left
actions of $X_e = A$ on each $X_p$.
For $p \in P$ we let $\phi_p:A\to\Ll(X_p)$ be the homomorphism that implements the left action.
Given $p, q \in P$ with $p \not= e$ there is a homomorphism
$\iota^{pq}_p \colon \Ll(X_p) \to \Ll(X_{pq})$ such that
$\iota^{pq}_p(S)(xy) = (Sx)y$ for all $x \in X_p$, $y \in
X_{q}$ and $S \in \Ll(X_p)$. Upon identifying $\Kk(X_e)=A$, we let
$\iota^q_e \colon \Kk(X_e)\to \Ll(X_{q})$ be given by $\iota^q_e=\phi_q$, see \cite[\S 2.2]{SY}.

Recall that for a $C^*$-correspondence $Y$ over $A$, a map $\psi:Y\to B$ and
a homomorphism $\pi:A\to B$ into a $C^*$-algebra form a \emph{Toeplitz representation} if
$\psi(x\cdot a)=\psi(x)\pi(a)$ and $\pi(\langle x, y\rangle)=\psi(x)^*\psi(y)$ for
all $a\in A$, $x,y\in Y$. A map $\psi$ of a product system $X$ into a $C^*$-algebra $B$ is a \emph{Toeplitz representation} if
$\psi(xy)=\psi(x)\psi(y)$ for all $x,y\in X$ and $(\psi\vert_{X_p},\psi\vert_{X_e})$ is a Toeplitz
representation of the $C^*$-correspondence $X_p$, for all $p\in P$.

\vskip0.2cm
We recall from \cite{N} that a quasi-lattice ordered group $(G, P)$ consists of a subsemigroup
$P$ of a (discrete) group $G$ such that $P\cap P^{-1}=\{e\}$ and every finite subset of $G$ with a
common upper bound in $P$ admits a least common upper bound in $P$, all taken with respect to the left-invariant partial order
on $G$ given by $x\leq y$ if $x^{-1}y\in P$. We write $x\vee y<\infty$ to indicate that $x,y$ have
a common upper bound in $P$, and then $x\vee y$ denotes their least common upper bound in $P$.
If no common upper bound of $x,y$ exists in $P$ we write $x\vee y=\infty$. The semigroup $P$
is \emph{directed} if $x\vee y<\infty$ for all $x,y\in P$.

Given a quasi-lattice ordered group $(G,P)$, a product system
$X$ over $P$ is called \emph{compactly aligned} if $\iota^{p \vee q}_p(S) \iota^{p
\vee q}_q(T) \in \Kk(X_{p\vee q})$ whenever $S \in \Kk(X_p)$
and $T \in \Kk(X_q)$, and $p \vee q < \infty$ cf. \cite{CLSV} or \cite[Definition 5.7]{F}
in case each $X_p$ is essential.
If $\psi:X\to B$ is a Toeplitz representation,  there are
homomorphisms $\psi^{(p)}:\Kk(X_p)\to B$ such that $\psi^{(p)}(\theta_{x,y})=\psi_p(x)\psi_p(y)^*$ for all
$p\in P$ and $x,y\in X$, \cite{P}. When $X$ is compactly aligned, $\psi$ is said to be \emph{Nica covariant} if $\psi^{(p)}(S)\psi^{(q)}(T)$
is $\psi^{(p\vee q)}(\iota^{p \vee q}_p(S) \iota^{p\vee q}_q(T))$ in case $p\vee q<\infty$ and is zero otherwise, see \cite{F}.

Fowler introduced a $C^*$-algebra $\Tc(X)$ and showed it is universal for Nica covariant Toeplitz
representations of $X$, \cite{F}. Here we shall use the notation $\NT(X)$ instead of $\Tc(X)$
because, as advocated for in  \cite[Remark 5.3]{BanHLR}, the choice of
 $\Tc(X)$ for a $C^*$-algebra generated by a
universal representation was unfortunate. Fowler introduced also a Cuntz-Pimsner algebra of $X$
 by imposing usual Cuntz-Pimsner covariance in the sense of \cite{P} in each fibre $X_p$.

 Given a quasi-lattice ordered group $(G,P)$ and a compactly aligned product system $X$
over $P$ of $C^*$-correspondences over $A$, Sims and Yeend \cite{SY} introduced a new notion of Cuntz-Pimsner covariance
 for a Toeplitz representation of $X$. The definition is quite complicated and we will not give it here. It was
proved in \cite[Theorem 4.1]{SY} that the universal Cuntz-Nica-Pimsner covariant representation $j_X$ of $X$ is
injective, meaning that $j_X\vert_{X_e}$ is injective, if $X$ is $\tilde\phi$-injective
(see \cite[\S 2.4]{CLSV} for the definition of this concept). The universal $C^*$-algebra for
Cuntz-Nica-Pimsner covariant representations of $X$, denoted $\NO{X}$, is then nontrivial.

\section{Gauge-invariant uniqueness  for $\NT(X)$ and $\NO{X}$}
\label{NO}

Fix a quasi-lattice ordered group $(G,P)$ and a compactly aligned product system $X$
over $P$ of $C^*$-correspondences over $A$. There is a canonical coaction $(\NT(X),\delta)$ of $G$, and we let
$\mathcal{B}$ be the associated Fell bundle. If $X$ is $\tilde\phi$-injective, there is also a canonical coaction
$(\NO{X},\nu)$ of $G$, whose associated Fell bundle is denoted by $\mathcal{N}$.  It was shown in \cite[Remark 4.5]{CLSV}
that $C^*(\mathcal{B})\cong \NT(X)$ and $C^*(\mathcal{N})\cong \NO{X}$. Equivalently,
 both coactions $\delta$ on $\NT(X)$ and $\nu$ on $\NO{X}$ are maximal in the sense of \cite{EKQ}.

The following terminology was introduced in \cite[Definition 4.10]{CLSV}: $\NO{X}$ has the \emph{gauge-invariant
uniqueness property} provided that a surjective homomorphism $\varphi:\NO{X}\to B$ is injective if
and only if:
\begin{enumerate}\renewcommand{\theenumi}{GI\arabic{enumi}}
\item\label{it:carries coaction} there is a coaction
    $\beta$ of $G$ on $B$ such that
    $\varphi$ is $\nu-\beta$ equivariant,
    and
\item\label{it:inj on A} the homomorphism $\varphi\vert_{j_X(A)}$ is injective.
\end{enumerate}

The \emph{gauge-invariant uniqueness theorem} for $\NO{X}$  is \cite[Corollary 4.11]{CLSV}
and gives a number of necessary
and sufficient conditions for $\NO{X}$  to have the  gauge-invariant uniqueness property. For instance, $\NO{X}$
has the gauge-invariant uniqueness property precisely when the gauge-coaction $\nu$ is normal. Thus  the gauge-invariant
uniqueness theorem holds for $\NO{X}$ provided that $\nu$ is both maximal and normal.

In the next result we recast the gauge-invariant uniqueness property for $\NO{X}$ by asking for a maximal
 coaction on the target algebra. The apparently short proof follows from the general uniqueness
 theorems worked out in the context of Fell bundles in the appendix, and illustrates the power of coaction
 techniques.

 \begin{theorem}[The gauge-invariant uniqueness theorem for $\NO{X}$ and maximal coactions]
 \label{GIUTNO}
 Let $(G,P)$ be a quasi-lattice ordered group
 and $X$ a $\tilde\phi$-injective compactly aligned product system over $P$ of $C^*$-correspondences over $A$. A surjective
homomorphism  $\pi:\NO{X}\to B$ is injective if and only if $\pi$ is injective
on $\NO{X}^\nu$ and there is
 a \emph{maximal} coaction $\beta$ on $B$ such that $\pi$ is $\nu-\beta$ equivariant.
  \end{theorem}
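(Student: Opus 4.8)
The plan is to prove the two implications separately, reducing the substantive direction to the abstract gauge-invariant uniqueness theorem for maximal coactions proved in the appendix. Throughout I write $\mathcal{N}$ for the Fell bundle of $(\NO{X},\nu)$, so that the fixed-point algebra is the unit fibre, $\NO{X}^\nu=\mathcal{N}_e$, and I let $\Cc$ denote the Fell bundle associated to $(B,\beta)$.

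The forward implication is essentially formal. If $\pi$ is injective then, being also surjective, it is an isomorphism, so in particular it is injective on $\NO{X}^\nu$. To supply the coaction on $B$, I would transport $\nu$ across $\pi$ by setting $\beta=(\pi\otimes\id)\circ\nu\circ\pi^{-1}$; this is a coaction of $G$ on $B$ for which $\pi$ is $\nu-\beta$ equivariant by construction. Since $\pi$ is then an isomorphism of coactions and maximality is invariant under such isomorphisms (by functoriality of the maximalization), $\beta$ is maximal.

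For the reverse implication I would use maximality of both coactions to pass to cross-sectional algebras. Maximality of $\nu$ gives $\NO{X}\cong C^*(\mathcal{N})$, and maximality of $\beta$ identifies $B$ with the full cross-sectional algebra $C^*(\Cc)$; the equivariant homomorphism $\pi$ is then induced by a homomorphism of Fell bundles $\tilde\pi\colon\mathcal{N}\to\Cc$. The goal becomes to show that $\tilde\pi$ is an isomorphism of Fell bundles. Two fibrewise observations provide this. First, surjectivity of $\pi$ together with equivariance makes $\tilde\pi$ surjective on each fibre: from $\beta_s\circ\pi=\pi\circ\nu_s$ and $\beta_s(b)=b$ for $b\in B_s$, every such $b$ equals $\pi(\nu_s(a))\in\tilde\pi(\mathcal{N}_s)$ for a suitable preimage $a$. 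Second, injectivity of $\pi$ on $\mathcal{N}_e$ propagates to all fibres via the $C^*$-identity in the bundle: for $a\in\mathcal{N}_s$,
\[
\norm{\tilde\pi(a)}^2=\norm{\tilde\pi(a)^*\tilde\pi(a)}=\norm{\tilde\pi(a^*a)}=\norm{a^*a}=\norm{a}^2,
\]
so $\tilde\pi$ is isometric, hence injective, on every fibre.

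The main obstacle, and the reason the genuine work is deferred to the appendix, is the passage from fibrewise bijectivity to the cross-sectional algebras: I am invoking that a homomorphism of Fell bundles which is bijective on each fibre induces an isomorphism of the associated \emph{full} cross-sectional $C^*$-algebras, by functoriality of $C^*(-)$. This is exactly what the abstract gauge-invariant uniqueness theorem for maximal coactions delivers. Granting it, the two observations above show $\tilde\pi$ is a Fell-bundle isomorphism, whence $\pi=C^*(\tilde\pi)$ is an isomorphism and in particular injective, completing the proof.
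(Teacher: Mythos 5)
Your proof is correct, and its fibrewise core is identical to the paper's: surjectivity on each spectral subspace via $\beta_s\circ\pi=\pi\circ\nu_s$, and isometry on each fibre from injectivity on the unit fibre together with the $C^*$-identity --- this is precisely the opening computation in Proposition~\ref{GIUTFB}. Where you genuinely diverge is in the final step. The paper's proof of the theorem is a one-line appeal to Corollary~\ref{cor:giut-max-coact}, whose engine, Proposition~\ref{GIUTFB}, passes through normalizations (using $A^n\cong C^*_r(\Aa)$ and $B^n\cong C^*_r(\Bb)$), deduces that $\pi\times G$ is an isomorphism from a commuting square of crossed products, and then invokes \cite[Proposition~3.1]{clda} to get part~\eqref{it:epsilon-max}. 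You instead exploit maximality of \emph{both} coactions to identify $\NO{X}\cong C^*(\mathcal{N})$ and $B\cong C^*(\Cc)$ canonically, and conclude by functoriality of the full cross-sectional algebra applied to the fibrewise-bijective bundle map $\tilde\pi$. Your route is more elementary and self-contained in this situation (no crossed products, no normalizations), but it leans on $\nu$ being already known to be maximal and on the standard fact for discrete $G$ that a coaction is maximal exactly when the canonical surjection from the full cross-sectional algebra of its Fell bundle is an isomorphism --- the same fact the paper uses for $\nu$ via \cite[Remark~4.5]{CLSV}, so you should cite it for $\beta$ as well; the paper's proposition, by contrast, is more general (part~\eqref{it:epsilon-max} does not presuppose $\delta$ maximal, and the same statement simultaneously yields the normal-coaction GIUT and the normalization/maximalization assertions used elsewhere). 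Two small repairs: you should observe that $\pi$ agrees with $C^*(\tilde\pi)$ under the canonical identifications (both restrict to $\tilde\pi$ on the fibres, which generate); and your closing framing has the logic inverted --- that a fibrewise-bijective Fell-bundle homomorphism induces an isomorphism of full cross-sectional algebras is elementary functoriality (the fibrewise inverse is again a bundle homomorphism), an \emph{ingredient} of the abstract gauge-invariant uniqueness theorem rather than something you need that theorem to deliver.
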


\begin{proof}
Apply Corollary~\ref{cor:giut-max-coact} to $(\NO{X}, \nu)$ and $\pi$.
\end{proof}

Compared to \cite[Corollary 4.12]{CLSV}, Theorem~\ref{GIUTNO} does
not require amenability of $G$, and can be  applied to arbitrary
$\tilde\phi$-injective compactly aligned product systems over $P$ (for which $j_X$ is an
injective representation). The drawback is that $\pi$ needs to be
injective on the entire fixed-point algebra for $\nu$, and not just on the coefficient
algebra.

In practice, the injectivity of $\pi$ on $\NO{X}^\nu$   is likely to be difficult
to establish. However, when the compactly aligned product system satisfies one of the
two conditions: the left actions on the fibres of $X$ are all injective, or $P$ is directed and $X$ is $\tilde\phi$-injective,
then \cite[Theorem 3.8]{CLSV} says that
$\pi$ is injective on $\NO{X}^\nu$  precisely when it is injective as a Toeplitz representation, i.e. its restriction to
$j_X(A)$ is an injective homomorphism.

\begin{example} Suppose that $G$ is a nonabelian finite-type Artin group. Then $G$ and
its positive cone $P$ form a quasi-lattice ordered group. By \cite{CLac1}, $P$ is directed
and $G$ is not amenable. Then, if $X$ is the product system over $P$ with fibers $\CC$,
the algebra $\NO{X}$ is isomorphic to $C^*(G)$ and does not have the gauge-invariant
uniqueness property (see \cite[Remark 5.4]{CLSV} for details).  However, since $\NO{X}^\nu=\CC$,
Theorem~\ref{GIUTNO} implies that a surjective homomorphism $\pi:C^*(G)\to B$ is injective if and only if $B$ carries
a  compatible maximal coaction.
\end{example}

Since also $(\NT(X), \delta)$ is a maximal coaction, we have a version of
\thmref{GIUTNO} for $\NT(X)$.

 \begin{theorem}[The gauge-invariant uniqueness theorem for $\NT(X)$ and maximal coactions]
 Let $(G,P)$ be a quasi-lattice ordered group
 and $X$ a compactly aligned product system over $P$ of $C^*$-correspondences over $A$.
 A surjective homomorphism $\pi:\NT(X)\to B$ is injective if and only if $\pi$ is
injective on $\NT(X)^\delta$ and
 there is a maximal coaction $\beta$ on $B$
 such that $\pi$ is $\delta-\beta$ equivariant.
 \end{theorem}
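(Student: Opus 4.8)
The plan is to recognize that this statement is the precise analogue of \thmref{GIUTNO}, with $(\NT(X),\delta)$ playing the role of $(\NO{X},\nu)$, and that the proof of \thmref{GIUTNO} exploited exactly one property of $(\NO{X},\nu)$: that the coaction is maximal. The abstract gauge-invariant uniqueness result for maximal coactions of Fell bundles, Corollary~\ref{cor:giut-max-coact}, is stated for an arbitrary maximal coaction $(A,\delta)$ and a surjective homomorphism $\pi\colon A\to B$, and it characterizes injectivity of $\pi$ by the two conditions that $\pi$ be injective on the fixed-point algebra $A^\delta$ and that $B$ carry a maximal coaction $\beta$ making $\pi$ equivariant. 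So the entire content of the present theorem is that these abstract hypotheses are met in the Nica--Toeplitz setting.

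First I would recall from the discussion following \cite[Remark~4.5]{CLSV} that $C^*(\Bb)\cong\NT(X)$, where $\Bb$ is the Fell bundle associated to the canonical coaction $\delta$; equivalently, $\delta$ is a maximal coaction of $G$ on $\NT(X)$. This is the only structural fact required, and it holds for \emph{every} compactly aligned product system $X$ over $P$, with no $\tilde\phi$-injectivity assumption---unlike the $\NO{X}$ case, where $\tilde\phi$-injectivity was needed both to guarantee that $\nu$ exists and that $C^*(\mathcal{N})\cong\NO{X}$. The fixed-point algebra $\NT(X)^\delta$ is the $e$-spectral subspace $\{b\in\NT(X):\delta(b)=b\otimes e\}$, which is exactly the object $A^\delta$ appearing in the general corollary.

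With maximality of $\delta$ in hand, the proof reduces to a single invocation: apply Corollary~\ref{cor:giut-max-coact} to the maximal coaction $(\NT(X),\delta)$ and the surjection $\pi$. I do not expect any genuine obstacle here; the real work has been front-loaded into the abstract Fell-bundle uniqueness theorem placed in the appendix. If anything, the point worth a moment's attention is purely a matter of matching conventions: that the $\delta$-$\beta$ equivariance and the fixed-point algebra $\NT(X)^\delta$ in the statement agree with the equivariance and fixed-point algebra used in the abstract corollary. Since both are phrased in terms of spectral subspaces---and since, as recorded in the preliminaries, $\pi$ is $\delta$-$\beta$ equivariant precisely when $\pi(\NT(X)_s)\subset B_s$ for all $s\in G$---this identification is immediate, and the theorem follows verbatim from the $\NO{X}$ argument.
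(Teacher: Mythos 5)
Your proposal is correct and matches the paper's proof exactly: the paper records (citing \cite[Remark~4.5]{CLSV}) that $\delta$ on $\NT(X)$ is maximal, and its proof is the single line ``Apply Corollary~\ref{cor:giut-max-coact} to $(\NT(X),\delta)$ and $\pi$.'' Your additional remarks---that no $\tilde\phi$-injectivity is needed here, and that the equivariance and fixed-point conventions line up---are accurate glosses on the same argument rather than a different route.
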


 \begin{proof}
 Apply Corollary~\ref{cor:giut-max-coact} to $(\NT(X),\delta)$ and $\pi$.
 \end{proof}

\begin{cor}
\label{thm:giut-Tcov}
Let $(G,P)$ be a quasi-lattice ordered group
 and  $X$ a compactly aligned product system over $P$ of $C^*$-correspondences over $A$. The coaction $(\NT(X), \delta)$
 is normal precisely when the following is satisfied: a surjective homomorphism $\pi:\NT(X)\to B$ is injective
 if and only if $\pi$ is injective on $\NT(X)^\delta$ and
 there is a coaction $\beta$ on $B$ such that $\pi$ is $\delta-\beta$ equivariant.
 \end{cor}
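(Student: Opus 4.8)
The plan is to prove the two implications of the biconditional separately, throughout using the identification $\NT(X)\cong C^*(\Bb)$ from \cite[Remark 4.5]{CLSV}, under which $\delta$ becomes the canonical maximal coaction $\delta_\Bb$ and the normalization of $\delta$ is realized by the regular representation $\Lambda_\Bb\colon(C^*(\Bb),\delta_\Bb)\to(C^*_r(\Bb),\delta_\Bb^n)$. The key background fact I rely on is that, $\delta$ being maximal, $\delta$ is normal if and only if its normalization map $\Lambda_\Bb$ is an isomorphism.

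For the implication ``uniqueness property $\Rightarrow$ normality,'' I would simply feed the normalization map into the hypothesis. The map $\Lambda_\Bb$ is surjective and $\delta$--$\delta_\Bb^n$ equivariant, with $\delta_\Bb^n$ a coaction on $C^*_r(\Bb)$; and since a normalization carries each spectral subspace isometrically onto the corresponding one, $\Lambda_\Bb$ restricts to an injection on $\NT(X)^\delta$. Thus $\Lambda_\Bb$ meets the right-hand conditions, so the assumed uniqueness property forces $\Lambda_\Bb$ to be injective; being also surjective it is an isomorphism, and therefore $\delta$ is normal.

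For the converse, assume $\delta$ is normal and let $\pi\colon\NT(X)\to B$ be surjective, injective on $\NT(X)^\delta$, and $\delta$--$\beta$ equivariant for some coaction $\beta$ on $B$ with associated Fell bundle $\Bb'$. The central step is to upgrade injectivity on the fixed-point algebra to an isomorphism of Fell bundles $\tilde\pi\colon\Bb\to\Bb'$. Surjectivity of $\tilde\pi$ onto each fibre follows because $\pi$ intertwines the norm-one projections of \eqref{def:delta_s}, that is $\beta_s\circ\pi=\pi\circ\delta_s$, so every $b\in B_s$ is $\beta_s$ of $\pi$ of a preimage, hence lies in $\pi(\Bb_s)$; injectivity on each fibre follows from the standard observation that if $a\in\Bb_s$ and $\pi(a)=0$, then $a^*a\in\NT(X)^\delta$ with $\pi(a^*a)=0$, whence $a^*a=0$ and $a=0$.

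Once $\tilde\pi$ is an isomorphism, functoriality of the full and reduced cross-sectional constructions gives isomorphisms $C^*(\Bb)\cong C^*(\Bb')$ and $C^*_r(\Bb)\cong C^*_r(\Bb')$ intertwining the two regular representations, so normality of $\delta$ (i.e. $\Lambda_\Bb$ an isomorphism) transports to say $\Lambda_{\Bb'}$ is an isomorphism. Since the coaction $\beta$ presents $B$ between its maximalization and normalization as $C^*(\Bb')\xrightarrow{q_B^m}B\xrightarrow{q_B^n}C^*_r(\Bb')$ with composite $\Lambda_{\Bb'}$, injectivity of $\Lambda_{\Bb'}$ forces $q_B^m$ to be injective, hence an isomorphism. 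As $\pi$ and $q_B^m\circ C^*(\tilde\pi)$ agree on the generating Fell bundle $\Bb$ and $C^*(\tilde\pi)$ is an isomorphism, $\pi=q_B^m\circ C^*(\tilde\pi)$ is an isomorphism, in particular injective. (Equivalently, $\beta$ has now been shown maximal, so one may instead quote Corollary~\ref{cor:giut-max-coact}.) I expect this converse to be the main obstacle, and within it the clean passage through $\tilde\pi$: one must verify the fibrewise isomorphism and then transport normality across it so that $B$ gets squeezed between the full and reduced algebras of the \emph{same} Fell bundle, with the rest being bookkeeping with the maximalization and normalization functors recalled in the preliminaries.
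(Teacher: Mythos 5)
Your proposal is correct, and its core is the same machinery the paper relegates to the appendix: the paper's own proof of this corollary is just ``apply Corollary~\ref{cor:characterise-normal}'', which in turn rests on Proposition~\ref{GIUTFB}. Your direction ``uniqueness property $\Rightarrow$ normal'' (feed the normalization $\Lambda_\Bb$ into the hypothesis) is exactly the paper's argument, and your fibrewise upgrade of injectivity on $\NT(X)^\delta$ to a Fell-bundle isomorphism $\tilde\pi\colon\Bb\to\Bb'$ (via $\beta_s\circ\pi=\pi\circ\delta_s$ for surjectivity and the $a^*a\in A_e$ trick for injectivity) is verbatim the first half of Proposition~\ref{GIUTFB}. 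Where you genuinely diverge is the endgame: the paper passes from $\tilde\pi$ to an isomorphism $\pi^n$ of normalizations, concludes that $\pi\times G$ is an isomorphism, and then invokes \cite[Proposition~3.1]{clda} to get injectivity of $\pi$ from normality of $\delta$; you instead avoid crossed products entirely by squeezing $B$ between the full and reduced cross-sectional algebras of the \emph{same} bundle, $C^*(\Bb')\to B\to C^*_r(\Bb')$ with composite $\Lambda_{\Bb'}$, and transporting injectivity of $\Lambda_\Bb$ across $\tilde\pi$. Your route is more self-contained for discrete groups and yields the bonus observation that $\beta$ is automatically maximal (so one could finish by Corollary~\ref{cor:giut-max-coact}); its cost is the nontrivial input that for an arbitrary discrete coaction the canonical surjection $C^*(\Bb')\to B$ is a maximalization and $q_B^n\circ q_B^m=\Lambda_{\Bb'}$, which needs a citation to \cite{EKQ} and \cite{QuiggDiscrete} --- though this is consistent with the paper's toolkit, since its proof of Proposition~\ref{GIUTFB} uses the parallel normalization fact $B^n\cong C^*_r(\Bb)$. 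Two minor housekeeping points: the inner statement is a biconditional, so you should record the trivial forward direction (an isomorphism $\pi$ transports $\delta$ to a coaction on $B$ and is injective on $\NT(X)^\delta$); and your use of $\Lambda_\Bb$ as \emph{the} normalization of $(\NT(X),\delta)$ is legitimate because the regular representation is a normalization and all normalizations are isomorphic, but that identification is worth one sentence.
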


\begin{proof} Apply Corollary~\ref{cor:characterise-normal}.
\end{proof}

\begin{cor} Let $(G,P)$ be a quasi-lattice ordered group
 and $X$ a $\tilde\phi$-injective compactly aligned product system over $P$ of $C^*$-correspondences over $A$. The
 coaction $(\NO{X}, \nu)$ is normal precisely when the following is satisfied: a surjective homomorphism $\pi:\NO{X}\to B$ is injective
 if and only if $\pi$ is injective on $\NO{X}^\nu$ and
 there is a coaction $\beta$ on $B$ such that $\pi$ is $\nu-\beta$ equivariant.
\end{cor}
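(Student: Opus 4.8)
The plan is to obtain this as the $\NO{X}$-analogue of the preceding Corollary~\ref{thm:giut-Tcov}, both of which specialize the single abstract characterization of normality for Fell-bundle coactions---Corollary~\ref{cor:characterise-normal} of the appendix---to a concrete maximal coaction. The crucial structural input is that, under the standing $\tilde\phi$-injectivity hypothesis, the coaction $(\NO{X},\nu)$ exists and is \emph{maximal}, as recorded above following \cite[Remark~4.5]{CLSV}. It is precisely for maximal coactions that Corollary~\ref{cor:characterise-normal} is designed: whereas Theorem~\ref{GIUTNO} shows that a surjection is injective iff it is injective on the fixed-point algebra and the target carries a compatible \emph{maximal} coaction (something automatic for maximal $\nu$), the abstract corollary pins down normality of $\nu$ as the exact condition under which the same uniqueness statement holds with an \emph{arbitrary} coaction $\beta$ on the target.

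First I would check that $(\NO{X},\nu)$ meets the hypotheses of Corollary~\ref{cor:characterise-normal}, namely that it is a maximal coaction of the discrete group $G$, with distinguished subalgebra the fixed-point algebra $\NO{X}^\nu$ on which injectivity of $\pi$ is to be tested. I would then match the two formulations verbatim: the corollary's conclusion is exactly that $\nu$ is normal if and only if every surjective $\pi\colon\NO{X}\to B$ is injective precisely when $\pi|_{\NO{X}^\nu}$ is injective and $\pi$ is $\nu$--$\beta$ equivariant for some coaction $\beta$ on $B$. Since this is word-for-word the displayed equivalence, the argument collapses to a single invocation, ``apply Corollary~\ref{cor:characterise-normal} to $(\NO{X},\nu)$,'' exactly parallel to the $\NT(X)$ case.

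Consequently there is no genuine obstacle within this proof itself: all of the analytic weight is borne by the abstract Fell-bundle result, and the sole nontrivial external fact---maximality of $\nu$---is furnished by \cite{CLSV}. The only point deserving care is to confirm that $\tilde\phi$-injectivity is what simultaneously guarantees the existence of $\nu$ and its maximality, since both are needed for Corollary~\ref{cor:characterise-normal} to apply.
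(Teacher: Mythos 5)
Your proposal is correct and takes essentially the same route as the paper, whose entire proof is the single line ``Apply Corollary~\ref{cor:characterise-normal}.'' One small correction to your final paragraph: Corollary~\ref{cor:characterise-normal} applies to an \emph{arbitrary} coaction $(A,\delta)$, so maximality of $\nu$ is not among its hypotheses---the $\tilde\phi$-injectivity assumption is needed only to guarantee that the coaction $(\NO{X},\nu)$ exists (with $j_X$ injective), not to make the abstract corollary applicable.
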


\begin{proof} Apply Corollary~\ref{cor:characterise-normal}.
\end{proof}

Next we recall from \cite{CLSV} that given
 a quasi-lattice ordered group $(G, P)$ and a compactly aligned product system $X$ over $P$ satisfying one of the following two conditions: the
left actions on the fibres of $X$ are all injective, or $P$ is directed and $X$ is $\tilde\phi$-injective, then the
$C^*$-algebra $\NO{X}^\reduced:=C_r^*(\mathcal{N})$ and the normalization $\nu^n$ of $\nu$
have the co-universal property of \cite[Theorem 4.1]{CLSV}. This co-universal property was used to identify various
reduced crossed product type $C^*$-algebras in the form $\NO{X}^\reduced$ for appropriate $X$, and also to investigate the
gauge-invariant uniqueness property in several contexts.

Our abstract uniqueness results for Fell bundles allow us to give a characterization of
injectivity of homomorphisms $\pi:B\to \NO{X}^\reduced$ that is an alternative to \cite[Corollary 4.9]{CLSV}.

\begin{thm}[The gauge-invariant uniqueness theorem for $\NO{X}^\reduced$ and normal coactions]
\label{GIUT-NOXreduced}
 Let
$(G, P)$ be a quasi-lattice ordered group and $X$ a $\tilde\phi$-injective compactly aligned product system over $P$ of
$C^*$-correspondences over $A$. A homomorphism
$\pi:B\to \NO{X}^\reduced$ is injective if and only if there is a normal coaction $\beta$ of $G$ on $B$ such that
$\pi$ is $\beta-\nu^n$ equivariant and $\pi\vert_{B_e}$ is injective.
\end{thm}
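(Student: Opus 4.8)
The plan is to recognise $(\NO{X}^{\reduced},\nu^n)$ as a normal coaction and then deduce the statement from an abstract uniqueness theorem for normal coactions of Fell bundles, dual to the maximal case \corref{cor:giut-max-coact} used above. By the discussion in \secref{prelim}, $\NO{X}^{\reduced}=C^*_r(\mathcal N)$ carries the normal coaction $\delta^n_{\mathcal N}$, and since the regular representation $\Lambda_{\mathcal N}\colon(\NO{X},\nu)\to(C^*_r(\mathcal N),\delta^n_{\mathcal N})$ is a normalisation we may identify $\nu^n=\delta^n_{\mathcal N}$; in particular $(\NO{X}^{\reduced},\nu^n)$ is normal, with fixed-point algebra $(\NO{X}^{\reduced})_e=\mathcal N_e$. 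I would therefore isolate in the appendix, and apply to $(\NO{X}^{\reduced},\nu^n)$ and $\pi$, the following abstract fact: if $(C,\varepsilon)$ is a normal coaction and $\pi\colon(B,\beta)\to(C,\varepsilon)$ is an equivariant homomorphism out of a normal coaction, then $\pi$ is injective if and only if $\pi|_{B_e}$ is injective.

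The content lies entirely in the sufficiency direction, and the key structural input is that a coaction is normal precisely when the canonical norm-one projection onto its fixed-point algebra is a \emph{faithful} conditional expectation. Assume then that $\beta$ is normal, $\pi$ is $\beta$-$\varepsilon$ equivariant, and $\pi|_{B_e}$ is injective; write $\beta_e=(\id\otimes\Chi_e)\circ\beta$ and $\varepsilon_e=(\id\otimes\Chi_e)\circ\varepsilon$ for the two expectations. Combining equivariance $(\pi\otimes\id)\circ\beta=\varepsilon\circ\pi$ with the slice-map identity $(\id\otimes\Chi_e)\circ(\pi\otimes\id)=\pi\circ(\id\otimes\Chi_e)$ gives the intertwining
\[
\varepsilon_e\circ\pi=\pi\circ\beta_e.
\]
Now if $b\in\ker\pi$ then $\pi(b^*b)=0$, so $\pi(\beta_e(b^*b))=\varepsilon_e(\pi(b^*b))=0$; since $\beta_e(b^*b)\in B_e$ and $\pi|_{B_e}$ is injective we obtain $\beta_e(b^*b)=0$, and faithfulness of $\beta_e$ (i.e.\ normality of $\beta$) forces $b^*b=0$ and hence $b=0$.

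The converse implication is the formal one: an injective $\pi$ restricts to an injective homomorphism on the fixed-point algebra, while the equivariant normal coaction $\beta$ is exactly the data presenting $\pi$ as a morphism of normal coactions. The main obstacle is thus not in either bookkeeping step but in the single structural point driving the argument, namely that \emph{normality of $\beta$} is what makes $\beta_e$ faithful; this is the normal-coaction counterpart of the role played by maximality in \thmref{GIUTNO}, where surjectivity and the maximalisation functor were used instead. The one computation I would carry out carefully is the intertwining identity above: checked on generators $b\in B_s$, equivariance reads $\pi(B_s)\subseteq\mathcal N_s$, and one verifies directly that the slice maps $(\id\otimes\Chi_e)$ on $B$ and on $\NO{X}^{\reduced}$ match under $\pi$, which is routine but is the place where full (as opposed to reduced) coactions must be handled honestly.
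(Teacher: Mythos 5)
Your proof is correct on the substantive direction, but it takes a genuinely different route from the paper. The paper's proof is a one-liner: apply \corref{cor:giut-nor-coact}, which in turn rests on \propref{GIUTFB} --- there, injectivity of $\pi|_{B_e}$ forces $\pi$ to map each spectral subspace isometrically onto its image, so the induced Fell-bundle map is an isomorphism, hence $\pi\times G$ is an isomorphism, and normality of $\beta$ then upgrades $\pi$ to an isomorphism via the normalization machinery of \cite{clda, BKQ}. You bypass crossed products and the maximalization/normalization functors entirely, using only the intertwining $\varepsilon_e\circ\pi=\pi\circ\beta_e$ together with faithfulness of $\beta_e$. Two remarks. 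First, your key structural input --- that a discrete coaction is normal exactly when $\beta_e$ is faithful --- is nowhere stated in the paper, so you should justify it; it follows from facts in \secref{prelim}: $(B^\normal,\beta^\normal)\cong(C^*_r(\Bb),\delta_\Bb^\normal)$, $\beta_e=\beta^\normal_e\circ q^\normal_B$, and the canonical expectation on $C^*_r(\Bb)$ is faithful. Second, your argument uses neither normality of $\nu^n$ nor surjectivity of $\pi$, so it is more elementary and slightly more general on the hard direction; what the paper's route buys (when $\pi$ is surjective) is the stronger conclusion that $\pi$ is an isomorphism, plus the functorial diagrams of \propref{GIUTFB}. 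One caveat applies equally to you and to the paper: for the ``only if'' direction with $\pi$ merely injective, one cannot in general transport $\nu^n$ back to $B$ (the range need not be invariant), so that direction really presupposes surjectivity, as in \corref{cor:giut-nor-coact}, where one sets $\beta=(\pi^{-1}\otimes\id)\circ\nu^n\circ\pi$; your one-sentence treatment of the converse glosses this exactly as the paper does.
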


\begin{proof} Apply Corollary~\ref{cor:giut-nor-coact}.
\end{proof}

To see how this relates to \cite{CLSV}, suppose $X$ is a compactly aligned product system over $P$ such that the
left actions on the fibres of $X$ are all injective, or $P$ is directed and $X$ is $\tilde\phi$-injective. Suppose also that
$\pi$ arises from the co-universal property of $\NO{X}^\reduced$ applied to an injective Nica covariant
Toeplitz representation $\psi:X\to B$, where there is a coaction $\beta$ of $G$ on $B$ making $\pi$ a $\beta-\nu^n$ equivariant homomorphism.
It is proved in \cite[Corollary 4.9]{CLSV} that $\pi$ is injective if and only if $\beta$ is normal and $\psi$  is Cuntz-Nica-Pimsner covariant.
In Theorem~\ref{GIUT-NOXreduced} the last condition is replaced by  $\pi\vert_{B_e}$ being injective.

\begin{cor}
Let
$(G, P)$ be a quasi-lattice ordered group and $X$ a $\tilde\phi$-injective compactly aligned product system over $P$ of
$C^*$-correspondences over $A$. The coaction $(\NO{X}^\reduced, \nu^n)$ is maximal precisely when the following is satisfied: a
homomorphism $\pi:B\to \NO{X}^\reduced$ is injective if and only if there is a coaction $\beta$ of $G$ on $B$ such that
$\pi$ is $\beta-\nu^n$ equivariant and $\pi\vert_{B_e}$ is injective.
\end{cor}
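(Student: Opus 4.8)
The plan is to deduce this from the gauge-invariant uniqueness theorem for normal coactions, \thmref{GIUT-NOXreduced}, exactly as \corref{thm:giut-Tcov} was deduced from its maximal counterpart; indeed the present statement is precisely the dual of \corref{thm:giut-Tcov}, with the roles of maximal and normal interchanged. Writing (P) for the displayed characterization, I first observe that the forward implication of (P) is free: if $\pi$ is injective then \thmref{GIUT-NOXreduced} already supplies a \emph{normal} coaction $\beta$ with $\pi$ $\beta-\nu^n$ equivariant and $\pi|_{B_e}$ injective, and a normal coaction is in particular a coaction. Hence (P) holds if and only if its reverse implication holds for every $\pi$: whenever there is \emph{some} coaction $\beta$ on $B$ with $\pi$ equivariant and $\pi|_{B_e}$ injective, then $\pi$ is injective. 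The entire content is thus to decide when this relaxation — from a normal witnessing coaction to an arbitrary one — is harmless, and to match that against maximality of $\nu^n$.

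The key device is the normalization functor. Fix a coaction $(B,\beta)$ and an equivariant $\pi:(B,\beta)\to(\NO{X}^\reduced,\nu^n)$ with $\pi|_{B_e}$ injective. Since $\nu^n$ is already normal it is its own normalization, so functoriality yields a factorization $\pi=\pi^n\circ q_B^n$ with $\pi^n:(B^n,\beta^n)\to(\NO{X}^\reduced,\nu^n)$ equivariant and $\beta^n$ normal. Because $q_B^n$ carries $B_e$ isomorphically onto $(B^n)_e$, the hypothesis $\pi|_{B_e}$ injective passes to $\pi^n|_{(B^n)_e}$, so \thmref{GIUT-NOXreduced} applies to $\pi^n$ and shows $\pi^n$ is injective. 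Consequently $\ker\pi=\ker q_B^n$, and $\pi$ is injective precisely when $q_B^n$ is, that is, precisely when $\beta$ is normal. Thus (P) can fail only through a \emph{non-normal} coaction mapping equivariantly into $(\NO{X}^\reduced,\nu^n)$ with its fixed-point algebra embedded injectively, and (P) holds if and only if no such witness exists.

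It then remains to identify the existence of such a witness with the failure of maximality of $\nu^n$. If $\nu^n$ is not maximal, the Fell bundle $\mathcal N$ is not amenable, and the regular representation $\Lambda_{\mathcal N}:(C^*(\mathcal N),\delta_{\mathcal N})\to(C_r^*(\mathcal N),\nu^n)=(\NO{X}^\reduced,\nu^n)$ — which is simultaneously a maximalization and a normalization — is equivariant, restricts to an isomorphism of fixed-point algebras, yet is not injective; since $\delta_{\mathcal N}$ is maximal but, by non-amenability, not normal, $\Lambda_{\mathcal N}$ is exactly the forbidden witness, and (P) fails. Conversely, assume $\nu^n$ is maximal, so $C^*(\mathcal N)=C_r^*(\mathcal N)$. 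Given an equivariant $\pi$ with $\pi|_{B_e}$ injective, the map $\pi^n$ above is an injective $*$-homomorphism, hence isometric on each spectral subspace, realizing the Fell bundle $\mathcal B$ shared by $\beta$ and $\beta^n$ as a sub-Fell-bundle of $\mathcal N$. The faithful conditional expectation $\NO{X}^\reduced\to(\NO{X}^\reduced)^{\nu^n}$ restricts to a faithful conditional expectation on the image, forcing $C^*(\mathcal B)=C_r^*(\mathcal B)$; thus $\mathcal B$ is amenable, $\beta$ is normal, and by the previous paragraph $\pi$ is injective. Hence (P) holds, completing the equivalence.

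I expect the last step — transporting amenability from $\mathcal N$ to the sub-Fell-bundle $\mathcal B$ — to be the main obstacle, since it is the one point that genuinely uses maximality of $\nu^n$ rather than mere normality. The cleanest packaging is to invoke the abstract maximal analogue of Corollary~\ref{cor:characterise-normal} for Fell bundles, whose hypotheses hold automatically here because $\nu^n$ is normal by construction, thereby reducing the argument to the single line of applying that corollary to $(\NO{X}^\reduced,\nu^n)$.
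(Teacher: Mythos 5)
Your closing remark is, in fact, the paper's entire proof: the statement is obtained by applying \corref{cor:characterise-max} (the maximal analogue of \corref{cor:characterise-normal}, proved in the appendix) to the coaction $(\NO{X}^\reduced,\nu^n)$. Much of your surrounding scaffolding is correct and matches the mechanism inside that corollary: the factorization $\pi=\pi^n\circ q_B^n$ with $\pi^n$ injective by \thmref{GIUT-NOXreduced}, giving $\ker\pi=\ker q_B^n$, correctly reduces the displayed property to the assertion that every witnessing coaction $\beta$ is normal; and when $\nu^n$ fails to be maximal, your witness $\Lambda_{\mathcal N}$ is, up to isomorphism, exactly the maximalization map $q_B^m$ that the paper's proof of \corref{cor:characterise-max} uses. (Like the paper, you implicitly treat $\pi$ as surjective; \corref{cor:characterise-max} and \propref{GIUTFB}\,(1) require this, and without it even the forward implication, which transports $\nu^n$ to $B$ along $\pi^{-1}$, is problematic.)

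The genuine gap is in the direction you yourself flag as the main obstacle, and your proposed argument for it does fail. The step ``the faithful conditional expectation restricts to a faithful conditional expectation on the image, forcing $C^*(\Bb)=C^*_r(\Bb)$'' proves nothing: a faithful expectation on a topologically graded subalgebra identifies that subalgebra with the \emph{reduced} algebra $C^*_r(\Bb)$ of its bundle, which is already known here, since the image is $\pi^n(B^n)\cong B^n\cong C^*_r(\Bb)$; it gives no information about the full algebra $C^*(\Bb)$, hence none about amenability of $\Bb$ or normality of $\beta$. The telltale sign is that your expectation argument never actually uses maximality of $\nu^n$ --- only its normality, which holds always --- so if it were valid it would show that \emph{every} equivariant witness is normal for arbitrary $X$, i.e.\ the displayed property would hold unconditionally, contradicting the first half of your own argument (take $B=\NO{X}$, $\beta=\nu$, $\pi=\Lambda_{\mathcal N}$ in the non-amenable Artin-group situation of Section~3: the image is all of $\NO{X}^\reduced$, the faithful expectation exists, yet $\mathcal N$ is not amenable). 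Moreover, the intermediate statement you lean on --- that amenability of $\mathcal N$ passes to the sub-Fell-bundle $\Bb$ --- is not established by anything in the paper. The correct mechanism, and the place where maximality genuinely enters, is at the level of crossed products as in \propref{GIUTFB}: for a surjective equivariant $\pi$ with $\pi\vert_{B_e}$ injective, $\pi\times G$ is \emph{automatically} an isomorphism (the induced bundle map is fibrewise isometric onto, so the normalizations agree), and maximality of the target coaction then upgrades the isomorphism $\pi\times G$ to an isomorphism $\pi$ via \cite[Proposition~3.1]{clda}; normality of $\beta$ is then a consequence, not the route in. Conditional expectations are the right tool only for the normal case, part~(2) of \propref{GIUTFB}.
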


\begin{proof} Apply Corollary~\ref{cor:characterise-max}.
\end{proof}

\section{Inner coactions}\label{section:inner}

In this section we study inner coactions in relation to faithfulness of representations.
First we recall some notation. The multiplier algebra
$M(C_0(G)\otimes C^*(G))$ is identified with the algebra of continuous bounded functions on $G$ with values in
$M(C^*(G))$ equipped with the
strict topology. Let $w_G$ be the unitary element of $M(C_0(G)\otimes C^*(G))$ given by the canonical embedding of $G$ in $M(C^*(G))$.
 Given a coaction $(A,\delta)$ and a $C^*$-algebra $D$, nondegenerate homomorphisms $\mu:C_0(G)\to M(D)$
and $\pi:A\to M(D)$ form a \emph{covariant pair} for $(A,\delta)$ provided that
the diagram
\[
\xymatrix@C+70pt{
A \ar[r]^-\delta \ar[d]_\pi
&A\otimes C^*(G) \ar[d]^{\pi\otimes\id}
\\
M(D) \ar[r]_-{\ad\bar{\mu\otimes\id}(w_G)\circ(\id\otimes 1)}
&M(D\otimes C^*(G))
}
\]
commutes, or, equivalently,
since $G$ is discrete,
provided that
\begin{equation}\label{def:cov-pair-coaction}
\pi(a_x)\mu(\Chi_y)=\mu(\Chi_{xy})\pi(a_x)
\end{equation}
for all $a_x\in A_x$, and all $x,y\in G$ (see  e.g.,  \cite[Section~2]{EQ}).

By \cite[Lemma~1.11]{QuiggFull}, any nondegenerate homomorphism $\mu:C_0(G)\to M(A)$ implements an \emph{inner} coaction $\delta^\mu$ on $A$ via
\[
\delta^\mu(a)=\ad\bar{\mu\otimes\id}(w_G)(a\otimes 1).
\]
Note that $(\id_A, \mu)$ forms a covariant pair for  every inner coaction $(A,\delta^\mu)$.

Every inner coaction is normal, by \cite[Proposition~2.3]{QuiggFull} (see also \cite[Lemma A.2]{BKQ}).
If $(A, G, \delta^\mu)$ is an inner coaction,
then $a\in A_e$ if and only if $a$ commutes with $\{\mu(\Chi_x): x\in G\}$. Indeed,
if $a\in A_e$ then $a$ commutes with every $\mu(\Chi_x)$ by  \eqref{def:cov-pair-coaction}.
Conversely, if $a$ commutes with every $\mu(\Chi_x)$ then $a$ commutes with $\mu(C_0(G))$, hence $a\otimes 1$ commutes with $\mu(C_0(G))\otimes C^*(G)$, and therefore with $\bar{\mu\otimes\id}(w_G)$, so $a\in A_e$.

\begin{rmk}\label{rmk:inner-via-projections}
We note that a
necessary and sufficient condition for a coaction $\delta$ on $A$ to be inner is that there is a family $\{p_x: x\in G\}$
of orthogonal projections in $M(A)$ that sum strictly to $1$ in $M(A)$  and satisfy
\begin{equation}\label{cond:et}
a_xp_y=p_{xy}a_x
\end{equation}
for all $a_x\in A_x \text{ and }x, y\in G$. Indeed, if $(A, \delta)$ is an inner coaction, there is a nondegenerate
homomorphism $\mu:C_0(G)\to M(A)$ such that $\id_A$ and $\mu$  satisfy \eqref{def:cov-pair-coaction}, which
turns into \eqref{cond:et}  by letting $p_y=\mu(\Chi_y)$.

Conversely, given a coaction $(A, \delta)$ and a family of projections satisfying \eqref{cond:et},
let $\mu:C_0(G)\to M(A)$ be the unique homomorphism satisfying $\mu(\Chi_y)=p_y$ for $y\in G$. Then $\mu$ is nondegenerate because $\sum_{y\in G}p_y=1$ strictly in
$M(A)$, and $(\id_A, \mu)$ forms a covariant pair by \eqref{cond:et}.
Unravelling the definitions, we have $\delta=\delta^\mu$.
\end{rmk}

\begin{theorem}[Abstract uniqueness theorem]\label{thm:abstract-CKUT} Let $(A, \delta)$ be an inner coaction. A
surjective homomorphism $\varphi:A\to B$ onto a $C^*$-algebra $B$ is injective if and only if $\varphi\vert_{A_e}$
is injective.
\end{theorem}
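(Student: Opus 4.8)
The plan is to exploit the explicit description of the inner coaction from \remref{rmk:inner-via-projections}, so that no coaction on the target $B$ need be invoked. Write $\delta=\delta^\mu$ and set $p_x=\mu(\Chi_x)$, so that $\{p_x:x\in G\}$ is a family of orthogonal projections in $M(A)$ summing strictly to $1$ and satisfying \eqref{cond:et}. One direction is immediate: if $\varphi$ is injective then so is its restriction to $A_e$. For the converse I would first record the formula
\[
\delta_e(a)=\sum_{x\in G}p_x\,a\,p_x,
\]
the sum converging strictly in $M(A)$ with limit in $A_e\subseteq A$. This comes straight out of $\delta^\mu(a)=\ad\bar{\mu\otimes\id}(w_G)(a\otimes 1)$: expanding $\bar{\mu\otimes\id}(w_G)=\sum_x p_x\otimes x$ gives $\delta^\mu(a)=\sum_{x,y}p_x a p_y\otimes xy^{-1}$, and applying the slice map $\id\otimes\Chi_e$ isolates the diagonal terms $xy^{-1}=e$.

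From this formula the faithfulness of $\delta_e$ as a conditional expectation onto $A_e$ is elementary and does not require the general ``normal implies faithful expectation'' result: writing $\delta_e(a^*a)=\sum_x (ap_x)^*(ap_x)$ as a strictly convergent sum of mutually orthogonal positive terms, $\delta_e(a^*a)=0$ forces each $ap_x=0$, whence $a=a\sum_x p_x=0$. The remaining point is to transport the expectation across $\varphi$. Since $\varphi$ is surjective it extends to a unital homomorphism $\bar\varphi:M(A)\to M(B)$; put $q_x=\bar\varphi(p_x)$, again orthogonal projections. Applying $\bar\varphi$ term by term to the displayed formula I expect to obtain
\[
\varphi(\delta_e(c))=\sum_{x\in G}q_x\,\varphi(c)\,q_x \qquad(c\in A).
\]
Now take $c=a^*a$ with $a\in\ker\varphi$. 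The right-hand side vanishes because $\varphi(c)=0$, so $\varphi(\delta_e(a^*a))=0$; since $\delta_e(a^*a)\in A_e$ and $\varphi\vert_{A_e}$ is injective, $\delta_e(a^*a)=0$; and faithfulness of $\delta_e$ then yields $a^*a=0$, i.e.\ $a=0$. Hence $\ker\varphi=0$.

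The main obstacle is purely analytic: justifying the term-by-term application of $\bar\varphi$ to the strictly convergent sum. The partial sums $\sum_{x\in F}p_x c p_x$ over finite $F\subseteq G$ are uniformly bounded by $\norm{c}$, since a diagonal compression by orthogonal projections is contractive, and they converge strictly to $\delta_e(c)\in A$; so I need that $\bar\varphi$ is strictly continuous on bounded sets, a standard property of the multiplier extension of a surjective homomorphism, to conclude $\varphi(\delta_e(c))=\bar\varphi(\delta_e(c))=\text{strict-}\lim_F\sum_{x\in F}q_x\varphi(c)q_x$. Once this is in place everything else is formal. Alternatively one could push $\delta^\mu$ forward along $\varphi$ via the $q_x$ to obtain an inner, hence normal, coaction $\varepsilon$ on $B$ with $\varphi$ equivariant, and then invoke faithfulness of $\varepsilon_e$ together with $\varphi\circ\delta_e=\varepsilon_e\circ\varphi$; but the direct computation above has the advantage of avoiding any need to check that the $q_x$ sum strictly to $1$ in $M(B)$.
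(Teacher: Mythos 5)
Your argument is correct, but it takes a genuinely different route from the paper's. The paper's proof is the pushforward you sketch only as an afterthought: it sets $\nu=\bar\varphi\circ\mu$, observes that $\delta^\nu$ is an inner (hence normal) coaction on $B$, checks $\delta-\delta^{\nu}$ equivariance of $\varphi$ by a one-line computation with \eqref{def:cov-pair-coaction}, and then quotes Proposition~\ref{GIUTFB}(2), i.e.\ the Fell-bundle/normalization machinery of the appendix; note that the issue you cite as the drawback of that route is vacuous, since nondegeneracy of $\nu$ (the $q_x$ summing strictly to $1$) is automatic from surjectivity of $\varphi$ and strict continuity of $\bar\varphi$ on bounded sets, because $\overline{\nu(C_0(G))B}=\overline{\varphi(\mu(C_0(G))A)}=B$. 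Your main argument instead stays entirely inside $A$: the pinching formula $\delta_e(a)=\sum_x p_xap_x$, a bare-hands proof that $\delta_e$ is faithful, and transport of the formula through $\bar\varphi$, so you never need a coaction on $B$, the normality of inner coactions, or the appendix --- a more elementary and self-contained proof of this one theorem. One simplification is available to you: the convergence is in norm, not merely strict, so your ``main obstacle'' evaporates. Indeed, for $a\in A_s$ with $s\neq e$, relation \eqref{cond:et} gives $p_xap_x=p_xp_{sx}a=0$, while for $a\in A_e$ one has $\sum_{x\in F}p_xap_x=\bigl(\sum_{x\in F}p_x\bigr)a\to a$ in norm; since the pinchings $T_F(a)=\sum_{x\in F}p_xap_x$ are uniformly contractive and the spectral subspaces span a dense subspace of $A$, the net $T_F(a)$ converges in norm to $\delta_e(a)$ for every $a\in A$, and plain norm continuity of $\varphi$ then yields $\varphi(\delta_e(c))=\lim_F\sum_{x\in F}q_x\varphi(c)q_x$. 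The trade-off between the two proofs: the paper's is three lines given the appendix and situates the theorem inside the maximalization/normalization framework it uses throughout, whereas yours buys independence from that framework at the cost of the (routine) convergence bookkeeping.
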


\begin{proof} Since $\delta$ is inner, there is a nondegenerate
homomorphism $\mu:C_0(G)\to M(A)$ such that $\delta=\delta^\mu$. Define a nondegenerate homomorphism $\nu:C_0(G)\to M(B)$ by $\nu=\bar\varphi\circ\mu$.
Then $\delta^\nu$ is an inner coaction on $B$, and the computation
\begin{align*}
\varphi(a_x)\nu(\Chi_y)&=\varphi(a_x)\overline{\varphi}(\mu(\Chi_y))=\varphi(a_x\mu(\Chi_y))\\
&=\varphi(\mu(\Chi_{xy})a_x) \text{ by } \eqref{def:cov-pair-coaction}\\
&=\nu(\Chi_{xy})\varphi(a_x)
\end{align*}
for $a_x\in A_x$ and $x,y\in G$ shows that $\varphi$ is $\delta-\delta^\nu$ equivariant. The theorem therefore follows from Proposition~\ref{GIUTFB} (2).
\end{proof}

Suppose that $(A, \delta)$ is a coaction of $G$. An ideal $I$ in $A$ is \emph{$\delta$-invariant}
if the restriction of $\delta$ to $I$ gives rise to a coaction of $G$ on $I$. If this is the case,
we let $\delta\vert_I$ be the restricted coaction on $I$.

\begin{cor}[Essential-inner uniqueness theorem]\label{cor:eiut} Let $(A, \delta)$ be a coaction of $G$ and $I$
 a $\delta$-invariant ideal in $A$ such that the coaction $\delta\vert_I$ of $G$ on $I$
is inner. If $I$ is an essential ideal in $A$, then
a homomorphism $\varphi:A\to B$ is injective if and only if $\varphi\vert_{A^\delta}$ is injective.
\end{cor}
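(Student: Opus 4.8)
The plan is to reduce injectivity of $\varphi$ on all of $A$ to injectivity on the essential ideal $I$, where the inner-coaction machinery of \thmref{thm:abstract-CKUT} is available, and then to propagate injectivity from $I$ back to $A$ using essentiality. The forward implication is immediate: if $\varphi$ is injective, then so is its restriction to the $C^*$-subalgebra $A^\delta$, so only the converse requires work.

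For the converse, suppose $\varphi\vert_{A^\delta}$ is injective. First I would identify the fixed-point algebra of the restricted coaction. Since the spectral subspaces of $\delta\vert_I$ are $I_s=I\cap A_s$, the fixed-point algebra is $I^{\delta\vert_I}=I\cap A_e=I\cap A^\delta$, which is contained in $A^\delta$. In particular $\varphi$ is injective on $I^{\delta\vert_I}$. Next I would apply \thmref{thm:abstract-CKUT} to the inner coaction $(I,\delta\vert_I)$ together with the restriction $\varphi\vert_I$, regarded as a surjection onto its image $\varphi(I)$. This image is a $C^*$-subalgebra of $B$, so $\varphi\vert_I\colon I\to\varphi(I)$ is a surjective homomorphism which, by the previous step, is injective on the fixed-point algebra $I^{\delta\vert_I}$; hence the theorem yields that $\varphi\vert_I$ is injective.

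Finally I would pass from $I$ to $A$. The kernel $\ker\varphi$ is a closed two-sided ideal of $A$, and $\ker\varphi\cap I\subseteq\ker(\varphi\vert_I)=\{0\}$. Because $I$ is essential, every nonzero closed ideal of $A$ meets $I$ nontrivially, and therefore $\ker\varphi=\{0\}$, so $\varphi$ is injective.

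The genuine content is concentrated in the reduction itself: recognizing that the hypotheses of \thmref{thm:abstract-CKUT} are met by $\varphi\vert_I$ as a surjection onto $\varphi(I)$, and that its fixed-point algebra $I^{\delta\vert_I}$ lands inside $A^\delta$ so that injectivity there is inherited from the assumption on $A^\delta$. The essentiality hypothesis is used only in the last step, which is precisely where the name \emph{essential-inner} is justified; that step is the standard fact that a homomorphism injective on an essential ideal is injective, and it presents no analytic difficulty. Thus I do not expect a serious obstacle beyond correctly setting up the application of the abstract theorem to the ideal.
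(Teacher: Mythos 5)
Your proposal is correct and follows essentially the same route as the paper's proof: injectivity on $A^\delta$ gives injectivity on $I^{\delta\vert_I}=A^\delta\cap I$, Theorem~\ref{thm:abstract-CKUT} applied to the inner coaction $(I,\delta\vert_I)$ yields injectivity of $\varphi\vert_I$, and essentiality of $I$ finishes the argument. You merely make explicit two points the paper leaves implicit --- regarding $\varphi\vert_I$ as a surjection onto its image $\varphi(I)$ so the abstract theorem applies, and the kernel-intersection form of the essentiality step --- which is fine.
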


\begin{proof} For the non-trivial direction, suppose that $\varphi\vert_{A^\delta}$ is injective.
Then $\varphi$ is injective on $I^{\delta\vert_I}=A^\delta\cap I$. Since $\delta\vert_I$ is inner,
Theorem~\ref{thm:abstract-CKUT} implies that $\varphi\vert_I$ is injective. But $I$
is an essential ideal, and so $\varphi$ is injective.
\end{proof}

\section{$C^*$-algebras of quasi-lattice ordered groups}\label{subsect:Nica-alg-semigp-cp}

In this section we recall Nica's constructions of $C^*$-algebras associated
to isometric representations of quasi-lattice ordered groups, we give a quick review of subsequent constructions, and
we make connections with coaction theory.

Let $(G, P)$ be a quasi-lattice ordered group. A semigroup homomorphism $V$ of $P$ into the isometries on a Hilbert
space $H$ such that $V_e=I$ and $V_sV_t=V_{st}$ for all $s,t\in P$ is called an (isometric) representation of $P$.
Let $\{\varepsilon_t\}_{t\in P}$ be the canonical orthonormal basis of $l^2(P)$.
The \emph{Toeplitz} or \emph{Wiener-Hopf representation} of $P$ on $l^2(P)$ is
given by $T_s\varepsilon_t=\varepsilon_{st}$, for $s,t\in P$. The \emph{Toeplitz algebra}
(or Wiener-Hopf algebra)  $\Tt(G, P)$ is the $C^*$-subalgebra of $B(l^2(P))$ generated by the image of $T$. Nica
noticed that $T_sT_s^*T_tT_t^*=T_{s\vee t}T_{s\vee t}^*$ when $s\vee t<\infty$ and is zero otherwise. Such
representations of $P$ are now called Nica covariant, and $C^*(G,P)$ is the universal $C^*$-algebra
generated by a Nica covariant representation  $v$ of $P$ (see \cite{N, LacR1}).

By \cite[Proposition 3.2]{N}, the family $\{T_sT_t^*: s,t \in P\}$ spans a dense subalgebra of $\Tt(G, P)$.
The \emph{diagonal subalgebra} of $\Tt(G, P)$ is $\Dd=\clsp\{T_sT_s^*: s\in P\}$.

We next recall some facts from \cite{LacR1}. Let $(G, P)$ be a quasi-lattice ordered group, and for each $s\in P$
write $1_s$ for the characteristic function  of the set $\{t\in P:s\leq t\}$.
Then  $B_P=\clsp\{1_s: s\in P\}$ is a commutative $C^*$-subalgebra of
$l^\infty(P)$, and $C^*(G,P)$ is the semigroup
crossed product $B_P\rtimes P$ arising from translation $t\mapsto (1_s\to 1_{ts})$ on $B_P$, see
\cite[Corollary 2.4]{LacR1}. By \cite[\S 6.1]{LacR1}, there is a coaction
$\delta$ of $G$ on $C^*(G,P)$ such that $\delta(v_s)=v_s\otimes s$ for all $s\in P$, and  $B_P$ is
the fixed-point algebra  $C^*(G,P)^\delta$.  Moreover,
\cite[Proposition 2.3]{LacR1} shows that every representation of $C^*(G,P)$ is determined by a
Nica covariant representation of $P$. We let ${\lambda_T}$ denote the representation of $C^*(G, P)$
determined by $T$, and note that it carries $1_s$ to $T_sT_s^*$ for all $s\in P$.

It follows from \cite[Proposition 5.6]{SY} that if  $X=\CC\times P$ is the
trivial product system over $P$ with fibers $X_p={}_{\CC}\CC_{\CC}$ for all $p\in P$, then $\NT(X)\cong C^*(G,P)$.
Since $\delta$  is maximal by \cite[Remark 4.5]{CLSV}, we shall view it
as a coaction on $C^*(G, P)=C^*(\mathcal{B})$ (recall that we let $\mathcal B$ denote the associated Fell bundle over $G$) with fixed point algebra equal to $B_P$. Recall
from \cite{EQ} that $C^*_r(\mathcal{B})$ is identified with the normalization $(C^*(\mathcal{B}))^n.$

\begin{prop}\label{prop:normalising-delta}
The representation $\lambda_T$ is both a maximalization and a normalization from
$(C^*(G, P), \delta)$ onto $(\Tt(G, P), \delta^n)$. In particular,
$\Tt(G, P)\cong C^*_r(\mathcal{B})$.
\end{prop}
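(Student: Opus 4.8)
The plan is to identify $\lambda_T$ as a \emph{normalization} of $(C^*(G,P),\delta)$. Since $\delta$ is maximal, the structural facts recalled in \secref{prelim} then give for free that $\lambda_T$ is simultaneously a maximalization, and the isomorphism $\Tt(G,P)\cong C^*_r(\mathcal{B})$ follows from uniqueness of normalizations together with the fact that the regular representation $\Lambda_{\mathcal{B}}\colon(C^*(\mathcal{B}),\delta)\to(C^*_r(\mathcal{B}),\delta_{\mathcal{B}}^n)$ is itself a normalization. The concrete goal I would aim for is the kernel equality $\ker\lambda_T=\ker\Lambda_{\mathcal{B}}$: two surjections with equal kernels differ by an isomorphism $C^*_r(\mathcal{B})\iso\Tt(G,P)$, along which I transport $\delta_{\mathcal{B}}^n$ to produce the coaction $\delta^n$ appearing in the statement and to inherit both universal properties.

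To compute the two kernels uniformly I would isolate the following template. Let $E=\delta_e=(\id\otimes\Chi_e)\circ\delta$ be the canonical conditional expectation of $C^*(G,P)$ onto $B_P=C^*(G,P)^\delta$. Suppose $\Psi\colon C^*(G,P)\to D$ is a surjective homomorphism that is faithful on $B_P$ and satisfies $\Psi\circ E=F\circ\Psi$ for some \emph{faithful} conditional expectation $F$ of $D$ onto $\Psi(B_P)$. Then $\ker\Psi=\{x:E(x^*x)=0\}$, by a two-line symmetric argument: if $\Psi(x)=0$ then $\Psi(E(x^*x))=F(\Psi(x^*x))=0$, and faithfulness of $\Psi$ on $B_P$ gives $E(x^*x)=0$; conversely $E(x^*x)=0$ forces $F(\Psi(x^*x))=\Psi(E(x^*x))=0$, whence $\Psi(x^*x)=0$ by faithfulness of $F$. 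Crucially the resulting kernel depends only on $E$, not on $\Psi$.

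The regular representation $\Lambda_{\mathcal{B}}$ fits this template, being faithful on $B_P$ and intertwining $E$ with the canonical expectation onto the fixed-point algebra of $C^*_r(\mathcal{B})$, which is faithful since that coaction is normal. It then remains to check that $\lambda_T$ fits it as well. For faithfulness on $B_P$ I would observe that $\lambda_T$ restricts there to $1_s\mapsto T_sT_s^*$, which is exactly the multiplication action of $B_P\subseteq\ell^\infty(P)$ on $\ell^2(P)$ and hence faithful; this is Nica's faithfulness of the Toeplitz representation on the diagonal. For the intertwining I would take $F=\Phi$, the compression of $B(\ell^2(P))$ onto its diagonal: a direct computation gives $\Phi(T_sT_t^*)=T_sT_s^*$ when $s=t$ and $0$ otherwise, so $\Phi$ maps $\Tt(G,P)$ into $\Dd=\lambda_T(B_P)$, and comparing with $E(v_sv_t^*)$ (which is $v_sv_s^*$ when $s=t$ and $0$ otherwise) on the spanning elements $v_sv_t^*$ yields $\lambda_T\circ E=\Phi\circ\lambda_T$. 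Faithfulness of $\Phi$ on $\Tt(G,P)$ is the clean point: a positive operator on $\ell^2(P)$ with vanishing diagonal kills every basis vector, hence is $0$. Thus $\ker\lambda_T=\{x:E(x^*x)=0\}=\ker\Lambda_{\mathcal{B}}$, as desired.

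The step I expect to be the main obstacle is the verification surrounding $\Phi$, namely that compression to the diagonal genuinely carries $\Tt(G,P)$ into $\Dd$ and restricts there to a faithful conditional expectation intertwining $E$. This is what forces one to descend from the ambient $B(\ell^2(P))$ — where diagonal compression is neither range-preserving for general subalgebras nor faithful — to the specific structure of the Wiener--Hopf algebra, and it is where the computation $\Phi(T_sT_t^*)=T_sT_s^*$ (for $s=t$, and $0$ otherwise) does the real work. Everything else — surjectivity of $\lambda_T$, the intrinsic kernel formula $\ker\Lambda_{\mathcal{B}}=\{x:E(x^*x)=0\}$, and the passage from normalization to maximalization — is either immediate or delivered by the cited facts about maximal coactions and regular representations of Fell bundles.
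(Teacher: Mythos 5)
Your argument is correct, and it reaches the proposition by a genuinely different route than the paper does. The paper's proof is short and categorical: it quotes \cite[Proposition~6.5]{QuiggRa} for the existence of a normal coaction $\eta$ on $\Tt(G,P)$ with $\eta(T_sT_t^*)=T_sT_t^*\otimes st^{-1}$, quotes \cite[Corollary~2.4(1)]{LacR1} for injectivity of $\lambda_T$ on $B_P$, and then feeds the equivariant surjection $\lambda_T\colon(C^*(G,P),\delta)\to(\Tt(G,P),\eta)$ into parts (3) and (4) of Proposition~\ref{GIUTFB}: maximality of $\delta$ makes $\lambda_T$ a maximalization, normality of $\eta$ makes it a normalization, and uniqueness of maximalizations and normalizations gives $\Tt(G,P)\cong C^*_r(\Bb)$. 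You never invoke a pre-existing coaction on $\Tt(G,P)$; you manufacture it instead. Your kernel template ($\ker\Psi=\{x:E(x^*x)=0\}$ whenever $\Psi$ is faithful on $B_P$ and intertwines $E=\delta_e$ with a faithful expectation on the image) is sound; the computation $\Phi(T_sT_t^*)=T_sT_s^*$ for $s=t$ and $0$ otherwise is right and extends by linearity and continuity to $\lambda_T\circ E=\Phi\circ\lambda_T$; faithfulness of the canonical expectation on $C^*_r(\Bb)$ is the standard consequence of normality that the paper itself uses in its final remark; and transporting $\delta^n_{\Bb}$ along the isomorphism induced by $\ker\lambda_T=\ker\Lambda_{\Bb}$ recovers exactly the coaction $\eta$ of the paper, since both send $T_sT_t^*$ to $T_sT_t^*\otimes st^{-1}$. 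The passage from normalization to maximalization via maximality of $\delta$ is indeed available from the facts recalled in \secref{prelim}. What the paper's route buys is economy: it reuses the appendix machinery that is the point of the whole paper, at the cost of two external citations. What yours buys is self-containedness at the spatial level: you in effect reprove, rather than cite, both the Quigg--Raeburn coaction and the Laca--Raeburn faithfulness on the diagonal, by the classical faithful-conditional-expectation argument that underlies Nica's and Exel's treatments of amenability, and your intrinsic kernel formula is of independent use. One slip in your closing commentary, harmless to the proof: diagonal compression on $B(\ell^2(P))$ \emph{is} faithful --- that is precisely your own ``clean point'' --- so the only genuine issue in the ambient algebra is range-preservation, i.e.\ checking $\Phi(\Tt(G,P))\subseteq\Dd$, which you do.
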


\begin{proof}
Using reduced coactions, it was shown in \cite[Proposition 6.5]{QuiggRa} that there is a
normal coaction $\eta$  on $\Tt(G, P)$ such that $\eta(T_sT_t^*)=T_sT_t^*\otimes st^{-1}$ for $s,t \in P$.
Then $\lambda_T: (C^*(G, P), \delta)\to (\Tt(G, P), \eta)$ is equivariant.

We noted in the preliminaries that the regular representation
$\Lambda_{\Bb}:(C^*(G, P), \delta)\to (C^*_r(\mathcal{B}), \delta^n)$ is both a
maximalization and a normalization. Since $\lambda_T$
is injective on $B_P$ by \cite[Corollary 2.4(1)]{LacR1}, Proposition~\ref{GIUTFB}, parts (\ref{it:delta-max}) and (\ref{it:epsilon-normal}),
imply that $\lambda_T$ is also both a maximalization and a normalization.
Since all maximalizations are isomorphic, and similarly for normalizations, we therefore have $C^*(G, P)\cong (\Tt(G, P))^m$ and
 $\Tt(G, P)\cong (C^*(G, P))^n$.
\end{proof}

Nica \cite[Definition 4.2]{N} defined $(G, P)$ to be amenable if  the representation $\lambda_T$
is an isomorphism. His definition motivated
Exel's definition of amenable Fell bundles in \cite{E}. Our
Proposition~\ref{prop:normalising-delta} shows that the Fell bundle $\mathcal{B}$ is
amenable when   $(G, P)$ is amenable in Nica's sense.

\section{Finite exhaustive sets of strictly positive elements}\label{section:fessub}

Throughout this section let $(G,P)$ be a quasi-lattice ordered group.

\begin{defn}
A \fp\ of $(G,P)$ is a finite subset $F\subset P\minus\{e\}$ such that $FP=P\minus\{e\}$.
\end{defn}

``\fp'' stands for ``finite exhaustive set of strictly positive elements'', and the existence of such an $F$ is easily seen to be equivalent to the existence, for each $x\in G$, of a finite set of strict upper bounds $S$ of $x$ (i.e., $x\lneqq y$ for all $y\in S$) that is exhaustive in the sense that every strict upper bound of $x$
has a lower bound in $S$ --- namely, take $S=xF$.
This condition was introduced
in \cite{N}, and was shown in
\cite[Proposition 6.3]{N} to be equivalent, among others, to the fact that $\Tt(G,P)$
contains the compact operators $\Kk(l^2(P))$.

As remarked in \cite{N}, all pairs $(G,P)$ with $P$ finitely generated have a \fp. In particular,
$(\FF_n, \FF_n^+)$ has a \fp\ for all $n\geq 1$. The pair $(\FF_\infty, \FF_\infty^+)$
does not have a \fp\ since in this case the Toeplitz algebra is isomorphic to $\Oo_\infty$ and is therefore simple.
Another example of
a quasi-lattice ordered group not having a \fp\ is $(\QQ_+^*, \NN^\times)$, endowed with the order given by
$r\leq s \iff r \text{ divides }s$. No finite set of non-zero positive integers different from $1$
can contain a lower bound for every element in $\NN^\times\setminus\{1\}$.

\begin{ex}
It is possible for $(G,P)$ to have a \fp\ but not be finitely generated.
For example,
consider $G = (R,+)$ and
$P={0}\cup[1,\infty)$.  Then $(G,P)$ is quasi-lattice ordered and has a \fp\ (and $P-P=G$), but
is not finitely generated.
\end{ex}

The following result is the essential-inner uniqueness theorem for $\Tt(G, P)$ when $(G, P)$ has a \fp.

\begin{theorem}\label{thm:describe-inner-Nica-alg} Let $(G, P)$ be a quasi-lattice ordered group and  $\delta^\normal$
 the canonical normal coaction on $\Tt(G, P)$. Assume $(G, P)$ has a \fp. Then the following assertions hold.

 \textnormal{(a)} $\Kk(l^2(P))$ is a $\delta^\normal$-invariant ideal in $\Tt(G, P)$ and $\delta^\normal\vert_{\Kk(l^2(P))}$ is an inner coaction.

 \textnormal{(b)} Let $\varphi$ be a homomorphism of $\Tt(G,P)$ into a $C^*$-algebra $B$.
Then the following are equivalent:
\begin{enumerate}
\item\label{it:phi-inj} The homomorphism $\varphi$ is injective.
\item\label{it:phi-inj-fix} The homomorphism $\varphi$ is injective on $\Dd$.
\item\label{it:phi-inj-chie} We have $\varphi(p_e)\not=0$, where $p_e$ is the rank-one projection onto $\varepsilon_e$.
\end{enumerate}
\end{theorem}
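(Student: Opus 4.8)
The plan is to treat the two parts separately, with part (a) carrying the real content. For (a) I would take as candidate projections $p_t=\theta_{\varepsilon_t,\varepsilon_t}$ for $t\in P$ and $p_x=0$ for $x\in G\setminus P$; these are mutually orthogonal projections in $M(\Kk(l^2(P)))=B(l^2(P))$ summing strictly to $1$. The one place the \fp\ hypothesis enters is the operator identity $p_e=\prod_{f\in F}(1-T_fT_f^*)$. Indeed $T_fT_f^*$ is the projection onto $\clsp\{\varepsilon_r:f\le r\}$, these range projections commute by Nica covariance, so the product projects onto the span of those $\varepsilon_r$ admitting no lower bound in $F$; the defining condition $FP=P\setminus\{e\}$ says that set is precisely $\{\varepsilon_e\}$. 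Expanding the product using $T_fT_f^*\,T_{f'}T_{f'}^*=T_{f\vee f'}T_{f\vee f'}^*$ writes $p_e$ as a finite linear combination of generators of $\Dd$, whence $p_e\in\Dd\subseteq\Tt(G,P)$. The matrix-unit identity $\theta_{\varepsilon_s,\varepsilon_t}=T_sp_eT_t^*$ then gives $\Kk(l^2(P))=\clsp\{T_sp_eT_t^*:s,t\in P\}\subseteq\Tt(G,P)$ and shows each such generator lies in the spectral subspace indexed by $st^{-1}$ (recall $\delta^\normal(T_sT_t^*)=T_sT_t^*\otimes st^{-1}$ and $p_e\in\Dd=\Tt(G,P)^{\delta^\normal}$); hence $\Kk(l^2(P))$ is a graded, therefore $\delta^\normal$-invariant, ideal. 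Innerness I would deduce from Remark~\ref{rmk:inner-via-projections}: it remains only to verify $a_xp_y=p_{xy}a_x$ on the homogeneous generators $a_x=\theta_{\varepsilon_s,\varepsilon_t}$ (with $x=st^{-1}$), which reduces to noting that both sides vanish unless $y=t$, in which case both equal $\theta_{\varepsilon_s,\varepsilon_t}$.

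For (b) I would run the cycle (1)$\Rightarrow$(2)$\Rightarrow$(3)$\Rightarrow$(1). The first implication is immediate. For (2)$\Rightarrow$(3), $p_e$ is a nonzero projection in $\Dd$, so injectivity of $\varphi$ on $\Dd$ gives $\varphi(p_e)\neq0$. For (3)$\Rightarrow$(1), I would use that $p_e\in\Kk(l^2(P))$ and that $\Kk(l^2(P))$ is simple: a nonzero homomorphism out of a simple $C^*$-algebra is injective, so $\varphi(p_e)\neq0$ yields injectivity of $\varphi$ on $\Kk(l^2(P))$. Since the compacts form an essential ideal of $\Tt(G,P)$ (they are already essential in $B(l^2(P))$), the kernel of $\varphi$ is an ideal of $\Tt(G,P)$ meeting $\Kk(l^2(P))$ trivially, and essentiality forces it to be zero, so $\varphi$ is injective. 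I would also record the conceptual reading that, by part (a), Corollary~\ref{cor:eiut} applies to the essential invariant ideal $\Kk(l^2(P))$ with its inner restricted coaction and delivers the equivalence (1)$\Leftrightarrow$(2) directly, since $\Tt(G,P)^{\delta^\normal}=\Dd$.

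The main obstacle sits entirely in part (a): converting the combinatorial \fp\ condition $FP=P\setminus\{e\}$ into the operator identity $\prod_{f\in F}(1-T_fT_f^*)=p_e$, which is what locates the rank-one projection onto $\varepsilon_e$ inside $\Tt(G,P)$. Once that is in hand, the matrix-unit description $\theta_{\varepsilon_s,\varepsilon_t}=T_sp_eT_t^*$, the grading, and the covariance check required by Remark~\ref{rmk:inner-via-projections} are routine, and part (b) reduces to assembling simplicity and essentiality of the compacts (optionally repackaged through Corollary~\ref{cor:eiut}).
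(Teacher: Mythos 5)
Your argument is correct, and it reaches part (a) by a genuinely lighter route than the paper does. Both proofs use the same projections --- the paper's $p_x=\lambda_T(1_{\{x\}})=\prod_{a\in F}(T_xT_x^*-T_{xa}T_{xa}^*)$ is exactly your $\theta_{\varepsilon_x,\varepsilon_x}$, and your identity $p_e=\prod_{f\in F}(1-T_fT_f^*)$ is the $x=e$ instance of the paper's equation \eqref{eq:Chi-x} pushed through $\lambda_T$ --- the same matrix units $T_sp_eT_t^*$, the same grading argument for invariance, and the same appeal to Remark~\ref{rmk:inner-via-projections}. The divergence is where the covariance relation $a_xp_y=p_{xy}a_x$ gets verified: the paper proves it for \emph{all} homogeneous elements of $C^*(G,P)$ via the four-case Lemma~\ref{prop:spectral-stuff} (resting on Lemma~\ref{lem:lub-products}) and transports it to $\Tt(G,P)_x$ through the maximalization $\lambda_T$ of Proposition~\ref{prop:normalising-delta} before restricting to the compacts, whereas you observe that Remark~\ref{rmk:inner-via-projections} only needs the relation on the spectral subspaces of the ideal itself, and since $\Kk(l^2(P))_x=\clsp\{\theta_{\varepsilon_s,\varepsilon_t}:st^{-1}=x\}$ (immediate from continuity of $(\id\otimes\Chi_x)\circ\delta^\normal$ on the spanning monomials --- worth one explicit sentence) the check collapses to a trivial matrix-unit computation. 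This bypasses the technical heart of the paper's argument; what the heavier Lemma~\ref{prop:spectral-stuff} buys is reuse, since it also drives Corollary~\ref{cor:6.8}, where the analogous inner ideal lives in the full algebra $C^*(G,P)$ and no Hilbert-space matrix units are available, as well as the closing remark on induced ideals --- neither of which your localized check would support. You should also spell out, rather than just assert, that gradedness gives $\delta^\normal$-invariance: the nondegeneracy condition $\clsp\,\delta^\normal(\Kk(l^2(P)))(1\otimes C^*(G))=\Kk(l^2(P))\otimes C^*(G)$ follows from $(T_sp_eT_t^*)\otimes z=(T_sp_eT_t^*\otimes st^{-1})(1\otimes ts^{-1}z)$ exactly as in the paper. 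For part (b), the paper proves \eqref{it:phi-inj-chie}$\Rightarrow$\eqref{it:phi-inj-fix} elementarily through the essential ideal $c_0(\iota(P))$ of $\Dd$ (Lemma~\ref{lem:essential-ideal}) and gets \eqref{it:phi-inj-fix}$\Rightarrow$\eqref{it:phi-inj} from Corollary~\ref{cor:eiut}; your direct \eqref{it:phi-inj-chie}$\Rightarrow$\eqref{it:phi-inj} via simplicity and essentiality of $\Kk(l^2(P))$ is precisely the shortcut the paper itself records in the remark following its proof, and your Corollary~\ref{cor:eiut} reading of \eqref{it:phi-inj-fix}$\Rightarrow$\eqref{it:phi-inj} is legitimate because $\lambda_T$ is a normalization, so $\Tt(G,P)^{\delta^\normal}=\Dd$.
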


To prove this theorem we shall need some preparation. The equivalence of (1) and (4) in the next result is implicit in \cite[Proposition 6.3]{N}.
We first recall a couple of facts about the Nica spectrum of $(G, P)$.

The spectrum of the commutative algebra $\Dd$  is the space $\Omega$ of all non-empty, hereditary, directed
subsets $A$ of $P$, see \cite[\S6]{N} for definitions and details. Assigning the
set $A_\gamma=\{s\in P: \gamma(T_sT_s^*)=1\}$ to a character $\gamma$ of $\Dd$ gives a homeomorphism of the character space of
$\Dd$ onto $\Omega$. Let
$\iota:P\to \Omega$ be the map $t\mapsto [e,t]$ from\cite[\S 6.3, Remark 1]{N}, where $[e,t]:=\{s\in P: s\leq t\}$.
Since $\lambda_T$ is an isomorphism of $B_P$ onto $\Dd$, there is a homeomorphism
$\widehat{B_P}\to \Omega$ given by $\gamma\to A_\gamma$ for
$A_\gamma=\{t\in P: \gamma(1_t)=1\}$, see \cite{Lac1}. Under this homeomorphism, $[e,t]$ corresponds to the character $\gamma$
of $B_P$  given by $\gamma(1_x)=1_x(t)$ for all $x\in P$.

\begin{lemma}\label{lem:essential-ideal}
Let $(G, P)$ be a quasi-lattice ordered group. The following statements are equivalent:
\begin{enumerate}
\item\label{it:pair-to-ideal1} $(G, P)$ has a \fp.
\item\label{it:pair-to-ideal2} $c_0(P)$ is contained in $B_P.$
\item\label{it:pair-to-ideal3} $c_0(P)$ is an essential ideal in $B_P$.
\item\label{it:pair-to-ideal4} $c_0(\iota(P))$ is an essential ideal in $\Dd$.
\end{enumerate}
\end{lemma}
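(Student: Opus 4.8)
The plan is to establish the four conditions equivalent through the three links (1)$\Leftrightarrow$(2), (2)$\Leftrightarrow$(3) and (3)$\Leftrightarrow$(4), placing essentially all of the work in (1)$\Leftrightarrow$(2). Throughout I would rely on the identity $1_s1_t=1_{s\vee t}$ in $B_P\subseteq\ell^\infty(P)$ (with $1_\infty:=0$), which holds because $\supp 1_s\cap\supp 1_t=\{u:s\vee t\le u\}$; thus finite products and lattice joins of the generators remain inside the dense $*$-subalgebra $\spn\{1_s:s\in P\}$. I also record that $s\le e$ forces $s=e$, so $[e,e]=\{e\}$.

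For (1)$\Rightarrow$(2) I would fix a \fp\ $F$ and a point $t\in P$, and observe that the strict upper bounds of $t$ form the set $t(P\minus\{e\})=tFP=\bigcup_{f\in F}\{u:tf\le u\}$. Hence the join $\bigvee_{f\in F}1_{tf}$, expanded by inclusion--exclusion into the products $1_{\vee_{f\in T}tf}$ over $\emptyset\ne T\subseteq F$, is the characteristic function of the strict upper bounds of $t$ and lies in $\spn\{1_s\}$. Therefore $\chi_{\{t\}}=1_t-\bigvee_{f\in F}1_{tf}\in B_P$ for every $t$, and since $c_0(P)=\clspn\{\chi_{\{t\}}:t\in P\}$, we conclude $c_0(P)\subseteq B_P$.

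The converse (2)$\Rightarrow$(1) is the crux, and here I would use only that $\chi_{\{e\}}\in B_P$. Pick a finite combination $b=\sum_{s\in S}c_s1_s$ with $\|\chi_{\{e\}}-b\|_\infty<\tfrac12$ and $e\in S$. Evaluating pointwise gives $b(t)=\sum_{s\in S,\,s\le t}c_s$, so $b(e)=c_e$ with $|c_e-1|<\tfrac12$, while $|b(t)|<\tfrac12$ for $t\ne e$. Were some $t\ne e$ to have no lower bound in $S\minus\{e\}$, then $b(t)=c_e$ would violate these two estimates simultaneously. Thus every $t\in P\minus\{e\}$ has a lower bound in the finite set $F:=S\minus\{e\}\subseteq P\minus\{e\}$, i.e.\ $P\minus\{e\}\subseteq FP$; the reverse inclusion is automatic, so $F$ is a \fp. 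This finiteness extraction from a norm approximation is the main obstacle, though once the approximating $b$ is in hand the argument is short.

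Finally I would dispatch the two transfers. Statement (3) presupposes $c_0(P)\subseteq B_P$, so (3)$\Rightarrow$(2) is immediate; conversely, under (2) the subspace $c_0(P)$ is an ideal of $B_P$ (being one in $\ell^\infty(P)$) and is essential, since $b\,c_0(P)=0$ forces $b(t)=(b\,\chi_{\{t\}})(t)=0$ for all $t$. For (3)$\Leftrightarrow$(4) I would carry everything across the isomorphism $\lambda_T:B_P\to\Dd$: from $\chi_{\{t\}}=1_t-\bigvee_{f\in F}1_{tf}$ and $\lambda_T(1_s)=T_sT_s^*$ one checks that $\lambda_T(\chi_{\{t\}})$ is the rank-one projection $p_t$ onto $\varepsilon_t$ (the range $\overline{\spn}\{\varepsilon_u:t\le u\}$ of $T_tT_t^*$, with the joins over the strict upper bounds removed, collapses to $\CC\varepsilon_t$), and under $\widehat{\Dd}\cong\Omega$ this $p_t$ sits at the isolated point $\iota(t)$. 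Hence $\lambda_T$ restricts to an isomorphism $c_0(P)\to\clspn\{p_t\}=c_0(\iota(P))$, and since isomorphisms preserve essential ideals, (3) and (4) are equivalent.
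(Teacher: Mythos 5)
Your proposal is correct, and it takes a genuinely different route for the crux. The forward direction matches the paper: your inclusion--exclusion expansion of $1_t-\bigvee_{f\in F}1_{tf}$ is the same computation as the paper's telescoping product $\prod_{a\in F}(1_x-1_{xa})=1_{\{x\}}$, and the transfers \eqref{it:pair-to-ideal2}$\Rightarrow$\eqref{it:pair-to-ideal3} and \eqref{it:pair-to-ideal3}$\Leftrightarrow$\eqref{it:pair-to-ideal4} through $\lambda_T\vert_{B_P}:B_P\to\Dd$ are as in the paper. The divergence is in the converse: the paper closes the cycle by proving \eqref{it:pair-to-ideal4}$\Rightarrow$\eqref{it:pair-to-ideal1}, arguing that essentiality of $c_0(\iota(P))$ forces $\iota(P)$ to be open and dense in the Nica spectrum $\Omega$, so each interval $[e,t]$ is open, and then invoking the implication $2\Rightarrow 4$ of \cite[Proposition~6.3]{N}; you instead prove \eqref{it:pair-to-ideal2}$\Rightarrow$\eqref{it:pair-to-ideal1} directly by a $\tfrac12$-approximation of $1_{\{e\}}$ by $b=\sum_{s\in S}c_s 1_s$, extracting the \fp\ $F=S\minus\{e\}$ from the incompatibility of $|c_e|>\tfrac12$ (evaluation at $e$, where $s\leq e$ forces $s=e$) with $|b(t)|<\tfrac12$ at any $t\neq e$ lacking a lower bound in $F$. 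This finiteness-from-norm-estimate argument is valid, elementary, and self-contained, avoiding both the topology of $\Omega$ and the citation of Nica; what the paper's route buys is the topological byproduct (openness and density of $\iota(P)$) and brevity via the literature. One small repair: in \eqref{it:pair-to-ideal4}$\Rightarrow$\eqref{it:pair-to-ideal3} you justify $\lambda_T(1_{\{t\}})=p_t$ via the formula $1_{\{t\}}=1_t-\bigvee_f 1_{tf}$, which presupposes a \fp\ that is not yet available in that direction; the circularity is harmless, since $\lambda_T(1_s)=T_sT_s^*$ is multiplication by $1_s$ on $l^2(P)$, so by continuity $\lambda_T(f)=M_f$ for every $f\in B_P$, and hence $p_t=M_{1_{\{t\}}}\in\Dd$ pulls back to $1_{\{t\}}\in B_P$ with no reference to $F$.
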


\begin{proof}
Let $F$ be a \fp\ for $(G,P)$, and for $x\in P$ define
$1_{\{x\}}\in l^\infty(P)$ by
\[
1_{\{x\}}(y)=\begin{cases}
1\case y=x\\
0\ifnot.
\end{cases}
\]
Then $c_0(P)$ is generated by the projections $1_{\{x\}}$ for $x\in P$.
To establish \eqref{it:pair-to-ideal1}$\Rightarrow$\eqref{it:pair-to-ideal2} it suffices to prove that
\begin{equation}\label{eq:Chi-x}
\prod_{a\in F}(1_x-1_{xa})=1_{\{x\}}
\end{equation}
 for all $x\in P$. Take $y\in P$, and note that if $x=y$ then
$1_{xa}(x)=0$ for all $a\in F$, so the left hand side of \eqref{eq:Chi-x} evaluated at $y$
is equal to $1_x(x)=1$.

If  $x^{-1}y\in P\setminus \{e\}$, there is $a\in F$ such that $a\leq x^{-1}y$. Hence
$(1_x-1_{xa})(y)=1_x(y)-1_{xa}(y)=0$, and so the  product on the left hand side of \eqref{eq:Chi-x} evaluated at $y$ is zero.

The remaining possibility for $y$ is $x\inv y\notin P$, in which case the left side \eqref{eq:Chi-x} is obviously zero; this establishes \eqref{it:pair-to-ideal1}$\Rightarrow$\eqref{it:pair-to-ideal2}.

The implication \eqref{it:pair-to-ideal2}$\Rightarrow$\eqref{it:pair-to-ideal3} is clear, and
\eqref{it:pair-to-ideal3}$\Rightarrow$\eqref{it:pair-to-ideal4} follows because the isomorphism $B_P\to \Dd$ carries
the ideal $c_0(P)$ onto  $c_0(\iota(P))$.

It remains to prove \eqref{it:pair-to-ideal4}$\Rightarrow$\eqref{it:pair-to-ideal1}. Since $c_0(\iota(P))$ is essential,
$\iota(P)$ is an open and dense subset of $\Omega$. Thus
the relative topology on $\iota(P)$ is the original topology on $P$, and so each interval
$[e,t]$ is open in $\Omega$. Then the implication $2\Rightarrow 4$ from \cite[Proposition 6.3]{N}
shows that $(G, P)$ has a \fp.
\end{proof}

The next result is a  sharpening of \cite[remark 2.2.3]{N}.

\begin{lemma}\label{lem:lub-products}
Let $(G, P)$ be a quasi-lattice ordered group. Assume $a,b,z\in G$ are such that at least one of $a,b$
is in $P$ and at least one of $za, zb$ is in $P$. Then $a\vee b<\infty$ precisely when
$z(a\vee b)<\infty$, in which case $z(a\vee b)=za\vee zb$ as
elements of $P$.
\end{lemma}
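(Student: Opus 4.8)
The plan is to reduce everything to the single observation that left translation $L_z\colon x\mapsto zx$ is an order-automorphism of $G$ (since $x\le y\iff zx\le zy$ by left-invariance of the order $x\le y\iff x^{-1}y\in P$), and to use the two positivity hypotheses solely to control membership in $P$. The elementary fact I would isolate at the outset is that $u\in P$ if and only if $e\le u$; consequently, if $u$ is an upper bound for a set one of whose members lies in $P$, then $u\ge e$ by transitivity and hence $u\in P$ automatically. This is the device that lets me ignore the ``in $P$'' clause in the definition of $\vee$ whenever a positive element is present, and it is the only place the quasi-lattice structure interacts nontrivially with the group translation.

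For the forward implication I would assume $w:=a\vee b<\infty$, so $w\in P$ is the least common upper bound of $\{a,b\}$ in $P$, with $a\le w$ and $b\le w$. Applying $L_z$ gives $za\le zw$ and $zb\le zw$. Using the hypothesis that one of $za,zb$ lies in $P$, say $za\in P$, I get $e\le za\le zw$, so $zw\in P$; thus $zw$ is a common upper bound of $\{za,zb\}$ in $P$, whence $za\vee zb<\infty$ and $za\vee zb\le zw$. To upgrade this to equality I set $v:=za\vee zb$ and pull it back: $a\le z^{-1}v$ and $b\le z^{-1}v$, and since one of $a,b$ lies in $P$ I obtain $e\le z^{-1}v$, so $z^{-1}v\in P$ is a common upper bound of $\{a,b\}$ in $P$. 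Minimality of $w$ then forces $w\le z^{-1}v$, i.e.\ $zw\le v$, and combined with $v\le zw$ this yields $za\vee zb=zw=z(a\vee b)$ as elements of $P$.

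For the reverse implication I would invoke symmetry rather than repeat the argument: replacing the triple $(a,b,z)$ by $(za,zb,z^{-1})$ interchanges the two positivity hypotheses and transforms the statement into the one just proved, so $za\vee zb<\infty$ implies $a\vee b<\infty$ together with the same identity. I do not expect a serious obstacle here; the only point requiring genuine care---and the whole reason the hypotheses are phrased as they are---is the bookkeeping of $P$-membership, since $a$ and $b$ need not themselves lie in $P$ and one therefore cannot directly quote the standard quasi-lattice facts stated for elements of $P$. The two hypotheses are consumed at exactly the two steps above where I must certify that a candidate common upper bound is positive.
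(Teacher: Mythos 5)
Your proof is correct and follows essentially the same route as the paper's: left-invariance of the partial order together with the observation that an upper bound dominating a positive element is itself in $P$, which is exactly how the paper certifies $z^{-1}w\in P$ and $z(a\vee b)\in P$ before invoking minimality of the least upper bound. The only cosmetic difference is that the paper argues both implications directly (beginning with the case $za\vee zb\in P$ and reusing the equality step for the converse), whereas you prove in full the direction starting from $a\vee b<\infty$ and deduce the other via the substitution $(a,b,z)\mapsto(za,zb,z^{-1})$, which legitimately interchanges the two positivity hypotheses.
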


\begin{proof}
Suppose $w:=za\vee zb\in P$. Then $a\leq z^{-1}w$ and $b\leq z^{-1}w$. Since
at least one of $a,b$ is in $P$ we necessarily have $z^{-1}w\in P$. Thus $a\vee b< \infty$. Since the order is left-invariant, $w\leq z(a\vee b)$. Then $z^{-1}w\leq a\vee b$,
so necessarily $z^{-1}w=a\vee b$.

Now suppose $a\vee b<\infty$. Then
by left invariance $za\leq z(a\vee b)$ and $zb\leq z(a\vee b)$. It follows that
$z(a\vee b)\in P$. Therefore  $za\vee zb\leq z(a\vee b)$, from which equality follows as in the previous paragraph.
\end{proof}

\begin{lem}\label{prop:spectral-stuff}
Let $(G, P)$ be a quasi-lattice ordered group having a \fp. The assignment
\begin{equation}\label{eq:et}
e_y=\begin{cases}
1_{\{y\}}&\text{ if }y\in P\\
0&\text{ if }y\in G\setminus P
\end{cases}
\end{equation}
for $y\in G$ defines a family of mutually orthogonal projections in $C^*(G, P)$ such that
\begin{equation}\label{need-delta-inner}
c_xe_y=e_{xy}c_x
\end{equation}
for all $c_x\in C^*(G, P)_x$, and $x, y\in G$.
\end{lem}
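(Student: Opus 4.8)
The plan is to handle the statement in three moves: first confirm the $e_y$ are mutually orthogonal projections in $C^*(G,P)$, then reduce the commutation relation \eqref{need-delta-inner} to the single generator relation $v_se_y=e_{sy}v_s$, and finally verify that one relation by pushing it through $\lambda_T$ into the concrete Toeplitz representation, where it is a one-line calculation.

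First I would record that the $e_y$ are projections. Because $(G,P)$ has a \fp, Lemma~\ref{lem:essential-ideal} gives $c_0(P)\subseteq B_P\subseteq C^*(G,P)$, and \eqref{eq:Chi-x} exhibits $1_{\{y\}}=\prod_{a\in F}(1_y-1_{ya})$ explicitly. Thus for $y\in P$ the element $e_y=1_{\{y\}}$ is a projection in $C^*(G,P)$, while $e_y=0$ for $y\notin P$; as characteristic functions of distinct singletons in $c_0(P)$, the nonzero $e_y$ are mutually orthogonal. Since $B_P=C^*(G,P)^\delta=C^*(G,P)_e$, each $e_y$ lies in the fixed-point algebra, so for $c_x\in C^*(G,P)_x$ both $c_xe_y$ and $e_{xy}c_x$ again lie in $C^*(G,P)_x$.

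Next I would reduce to generators. The relation $c_xe_y=e_{xy}c_x$ behaves well under the algebra operations: it is preserved under products, since $c_xc'_{x'}e_y=c_xe_{x'y}c'_{x'}=e_{xx'y}c_xc'_{x'}$, and under adjoints, since $e_y^*=e_y$ turns the instance $v_se_{s^{-1}y}=e_yv_s$ of the generator relation into $v_s^*e_y=e_{s^{-1}y}v_s^*$. A word in the generators $\{v_s,v_s^*:s\in P\}$ lies in the spectral subspace indexed by the product of the indices of its letters, and such words span a dense $*$-subalgebra of $C^*(G,P)$; applying the bounded projection $\delta_x=(\id\otimes\Chi_x)\circ\delta$ shows that the words lying in $C^*(G,P)_x$ have dense span there. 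Since $c\mapsto ce_y$ and $c\mapsto e_{xy}c$ are bounded, it therefore suffices to establish the single generator relation $v_se_y=e_{sy}v_s$ for all $s\in P$ and $y\in G$: the multiplicativity and adjoint observations then propagate it to all words, and continuity extends it to every $c_x\in C^*(G,P)_x$.

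Finally I would verify this relation through $\lambda_T$. Both $v_se_y$ and $e_{sy}v_s$ lie in $C^*(G,P)_s$, and by Proposition~\ref{prop:normalising-delta} the normalization $\lambda_T$ restricts to an isometric, hence injective, map on each spectral subspace; so it is enough to check the identity in $\Tt(G,P)\subseteq B(l^2(P))$. Applying $\lambda_T$ to \eqref{eq:Chi-x} and using $\lambda_T(1_r)=T_rT_r^*$ identifies $\lambda_T(e_y)$ with the rank-one projection onto $\CC\varepsilon_y$ when $y\in P$ and with $0$ otherwise. A short computation on the basis then gives $T_s\lambda_T(e_y)=\lambda_T(e_{sy})T_s$: for $y\in P$ both sides equal the rank-one operator sending $\varepsilon_y$ to $\varepsilon_{sy}$ and annihilating every other basis vector (note $sy\in P$ automatically), while for $y\notin P$ the left side is $0$ and the right side vanishes as well, since $T_s^*\varepsilon_{sy}=0$ whenever $sy\in P$ (because $s^{-1}(sy)=y\notin P$). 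By injectivity of $\lambda_T$ on $C^*(G,P)_s$ this yields $v_se_y=e_{sy}v_s$, and the reduction above then gives \eqref{need-delta-inner}. I expect the main obstacle to be conceptual rather than computational: the key move is to reduce to the generators and to recognize that, since $\lambda_T$ is isometric on spectral subspaces, the whole identity reduces to a calculation in the faithful Toeplitz picture; the only delicate bookkeeping is the case $y\notin P$, where one must confirm that both sides vanish even when $sy$ lands back in $P$.
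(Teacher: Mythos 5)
Your proof is correct, but it takes a genuinely different route from the paper's. The paper works entirely inside the universal algebra: using $C^*(G,P)_x=\clsp\{v_pv_q^*:x=pq^{-1}\}$ it reduces \eqref{need-delta-inner} to the case $c_x=v_pv_q^*$ and then runs a four-case analysis according to whether $y$ and $xy$ lie in $P$, manipulating $e_y=\prod_{a\in F}(1_y-1_{ya})$ by hand via Nica covariance and Lemma~\ref{lem:lub-products} (proved expressly for this purpose); in the two mixed cases the \fp\ is invoked to produce an $a'\in F$ that annihilates one factor of the product. You instead (i) reduce, via the $*$-algebraic stability of the relation and density of words in the spectral subspaces (cleanly justified with the projections $\delta_x$), to the single generator relation $v_se_y=e_{sy}v_s$ for $s\in P$, and (ii) verify it spatially: since Proposition~\ref{prop:normalising-delta} makes $\lambda_T$ isometric on each spectral subspace and both sides lie in $C^*(G,P)_s$, it suffices to check the identity for rank-one projections on $l^2(P)$, where it is a basis-vector computation; your handling of the one delicate subcase $y\notin P$, $sy\in P$ (where $T_s^*\varepsilon_{sy}=0$ because $s^{-1}(sy)=y\notin P$) is right. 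Note there is no circularity in invoking Proposition~\ref{prop:normalising-delta}: its proof rests on the appendix and on \cite{LacR1}, not on this lemma — though it is amusing that the paper uses that proposition for the opposite transfer, pushing \eqref{need-delta-inner} forward from $C^*(G,P)$ to $\Tt(G,P)$ in the proof of Theorem~\ref{thm:describe-inner-Nica-alg}, whereas you pull the relation back. What each approach buys: yours is shorter and nearly computation-free, concentrating the entire use of the \fp\ in the identification $\lambda_T(e_y)=\varepsilon_y\otimes\varepsilon_y$ coming from \eqref{eq:Chi-x}; the paper's direct computation is self-contained in the universal picture and generates intermediate identities (notably \eqref{eq:finiteqy} and the case-2/case-3 manipulations) that are reused later, in Corollary~\ref{cor:6.8} and in the remark on induced ideals, and it keeps Lemma~\ref{lem:lub-products} available as an independent tool.
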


Note that the family $\{e_y\}$ gives rise to a nondegenerate homomorphism
$\mu:c_0(G)\to c_0(P)$.

\begin{proof} Since $C^*(G, P)$ is the closed span of monomials $v_pv_q^*$, we have
\begin{equation}\label{eq:spectral-subsp-full}
C^*(G, P)_x=\begin{cases}
\clsp\{v_pv_q^*: x=pq^{-1}\}&\text{ if }x\in PP^{-1}\\
0&\text{ otherwise}.
\end{cases}
\end{equation}
It therefore suffices to prove
\eqref{need-delta-inner} when $c_x$ is of form $v_pv_q^*$ with $x=pq^{-1}\in PP^{-1}.$

\smallskip
\noindent{{\bf Case 1}: $y, xy\in P$.} We must show that $v_pv_q^*e_y=e_{xy}v_pv_q^*$. By
\eqref{eq:Chi-x} we have $e_y=\prod_{a\in F}(1_y-1_{ya})$ and
$e_{xy}=\prod_{a\in F}(1_{xy}-1_{xya})$.

If $y\vee q=\infty$ then Lemma~\ref{lem:lub-products} implies $xy\vee p=\infty$. Hence
Nica covariance of $v$ implies that $v_q^*1_y=v_q^*v_yv_y^*=0$
and $1_{xy}v_p=v_{xy}v_{xy}^*v_p=0$. Since  also $ya\vee q=\infty$ for all $a\in F$ (because
$y\leq ya$ for all $a\in F$), we likewise have $xya\vee p=\infty$, and therefore
$c_x1_{ya}=1_{xya}c_x$. In all, \eqref{need-delta-inner} is satisfied.

If $y\vee q<\infty$ then $xy\vee p=x(q\vee y)<\infty$ by Lemma~\ref{lem:lub-products}.
We then have
\begin{align}
c_x1_y&=v_pv_q^*1_y
=v_pv_q^*v_yv_y^*\notag \\
&=v_pv_{q^{-1}(q\vee y)}v^*_{y^{-1}(q\vee y)}v_y^*\notag\\
&=v_{x(q\vee y)}v^*_{q\vee y}\notag\\
&=v_{xy\vee p}v^*_{q\vee y}\label{eq:finiteqy}\\
&=v_{xy}v_{(xy)^{-1}(xy\vee p)}v _{p^{-1}(xy\vee p)}^*v^*_q\notag\\
&=1_{xy}v_pv_q^*=1_{xy}c_x.\label{computations-y}
\end{align}
Let $a\in F$. Two sub-cases arise: $ya\vee q=\infty$, in which case also $xya\vee p=\infty$, and
$c_x1_{ya}=1_{xya}c_x=0$ follows as in the previous paragraph. The second sub-case
has $ya\vee q<\infty$, which entails $xya\vee p<\infty$, and replacing $y$ with $ya$ in the computations leading to \eqref{computations-y} shows that $c_x1_{ya}=1_{xya}c_x$. The
equality \eqref{need-delta-inner} is thus satisfied in case 1.

\smallskip
\noindent{{\bf Case 2}: $y\in P$, $xy\notin P$.} We must show $v_pv_q^*e_y=0$. Equivalently,
we must show
\begin{equation}\label{eq:cxey-0}
\prod_{a\in F}v_pv_q^*(1_y-1_{ya})=0.
\end{equation}
Again, two sub-cases arise. If $q\vee y=\infty$, then also $q\vee ya=\infty$ for all
$a\in F$, and by Nica covariance we see that $v_q^*v_y=0=v_q^*v_{ya}$ for all $a\in F$.
Hence \eqref{eq:cxey-0} follows. In case $q\vee y\in P$, we have $v_pv_q^*1_y=
v_{xy\vee p}v^*_{q\vee y}$ by \eqref{eq:finiteqy} (where the use of
Lemma~\ref{lem:lub-products} is legitimate because $p,q,y\in P$).

To establish \eqref{eq:cxey-0} we claim that there exists $a'\in F$ such that
$v_pv_q^*(1_y-1_{ya'})=0$.  The assumption $xy\notin P$ implies $q^{-1}y\notin P$,
and so $y^{-1}(q\vee y)\in P\setminus \{e\}$. Thus by assumption there is $a'\in F$ with
$a'\leq y^{-1}(q\vee y)$. This gives $ya'\vee q\leq q\vee y$, and since the reverse inequality is satisfied because $F\subset P$ we get $q\vee ya'= q\vee y\in P$. Applying
Lemma~\ref{lem:lub-products} yields $xya'\vee p=x(q\vee ya')=x(q\vee y)=xy\vee p$,
and invoking equation~\eqref{eq:finiteqy} where $y$ is replaced by $ya'$ gives
$v_pv_q^*1_{ya'}=v_{xya'\vee p}v^*_{q\vee ya'}$. The claim is therefore proved, and case 2
is finished.

\smallskip
\noindent{{\bf Case 3}: $y\notin P$, $xy\in P$.} We must show that $e_{xy}v_pv_ q^*=0$.
Either $xy\vee p=\infty$, in which case $xya\vee p=\infty$ for all $a\in F$, and
\eqref{eq:cxey-0} follows by Nica covariance, or $xy\vee p\in P$. If this
last alternative happens, the
choice of $y$ implies that $p\nleqq xy$,  so $(xy)^{-1}(xy\vee p)\in P\setminus \{e\}$.
The \fp\ $F$ supplies $a'\in F$ with $a'\leq (xy)^{-1}(xy\vee p)$, and similarly
to case 2 we get $1_{xy}v_pv_q^*=1_{xya'}v_pv_q^*$, from which \eqref{eq:cxey-0} again follows.

\smallskip
\noindent{{\bf Case 4}: $y\notin P, xy\notin P$.} Then both sides of
\eqref{need-delta-inner}  are zero.
\end{proof}

\begin{proof}[Proof of Theorem~\ref{thm:describe-inner-Nica-alg}]
Since $(G, P)$ has a \fp, \cite[Proposition 6.3]{N} gives an inclusion $\Kk(l^2(P))\subset \Tt(G, P)$. Clearly
$\Kk(l ^2(P))$ is an ideal in $\Tt(G, P)$.

To prove part (a) we will show that $\Kk(l ^2(P))$ is $\delta^\normal$-invariant. This will give a
coaction $\delta^\normal_{\Kk(l^2(P))}$ on $\Kk(l ^2(P))$ obtained as restriction of $\delta^\normal$.
We shall then construct mutually orthogonal projections
$\{p_x: x\in G\}$ in $B(l^2(P))$ such that $\sum_{x\in G}p_x=I$ in weak-operator topology on
$B(l^2(P))$ and $a_xp_y=p_{xy}a_x$ for all
$a_x\in \Kk(l ^2(G, P))_x$ and $x,y\in G$. Remark~\ref{rmk:inner-via-projections} therefore provides an inner
coaction $\delta^\mu$ on ${\Kk(l^2(P))}$ with $\delta^\normal_{\Kk(l^2(P))}=\delta^\mu$.

Let $F$ be a \fp\ of $(G,P)$.
For every $x\in G$ define a projection in $B(l^2(P))$ by
\[
p_x=\begin{cases}
\lambda_T(1_{\{x\}})\case x\in P\\
0\case x\in G\minus P.
\end{cases}
\]
Note that for $x\in P$ we have
\[
p_x=\prod_{a\in F}(T_xT_x^*-T_{xa}T_{xa}^*).
\]
With $\xi\otimes \eta$
denoting the rank-one operator $(\xi\otimes \eta)(\zeta)=\eta\langle \xi, \zeta\rangle$ in $B(l ^2(P))$ we see that
$p_e=\varepsilon_e\otimes \varepsilon_e$. So $p_e\in \Kk(l^2(P))$.
Since also $p_e\in \Dd$, it follows that $\delta^\normal(p_e)=p_e\otimes 1$.
For  $x,y\in P$, the product $T_xp_eT_y^*$ is the rank-one operator $e_y\otimes e_x$, see also the proof of
\cite[Proposition 6.3]{N}. Thus $\Kk(l^2(P))$
is the closed span of monomials $T_xp_eT_y^*$ for $x,y\in P$. But
$$
\delta^\normal(T_xp_eT_y^*)=\delta^\normal(T_x)\delta^\normal(p_e)\delta^\normal(T_y^*)=(T_xp_eT_y^*)\otimes xy^{-1},
$$
showing that $\delta^\normal(\Kk(l^2(P)))\subset \Kk(l^2(P))\otimes C^*(G)$. Since
\[
(T_xp_eT_y^*)\otimes z=(T_xp_eT_y^*\otimes xy^{-1})(1\otimes yx^{-1}z),
\]
we have
\[
\clsp\delta^\normal(\Kk(l ^2(P)))(1\otimes C^*(G))=\Kk(l^2(P))\otimes C^*(G).
\]
Thus ${\Kk(l^2(P))}$  is $\delta^\normal$-invariant, and
by restriction $\delta^\normal$ gives a coaction $\delta^\normal\vert_{\Kk(l^2(P))}$ on $\Kk(l ^2(P))$.

Since $\lambda_T$ is a maximalization by Proposition~\ref{prop:normalising-delta}, it carries $C^*(G, P)_x$
isometrically onto $\Tt(G, P)_x$ for every $x\in G$. \lemref{prop:spectral-stuff} implies that
$a_xp_y=p_{xy}a_x$ for every $x\in PP^{-1}$, $a_x\in \Tt(G, P)_x$ and $y\in G$. In particular,
we may take $a_x\in \Kk(l^2(P))\cap \Tt(G, P)_x$, which shows that
$$
\delta^\mu\vert_{\Kk(l^2(P))_x}=\delta^\normal\vert_{\Kk(l^2(P))\cap \Tt(G, P)_x}.
$$
Hence $\delta^\normal\vert_{\Kk(l^2(P))}$ coincides with $\delta^\mu$, and thus is an inner coaction, as claimed in (a).

For (b), obviously it suffices to show \eqref{it:phi-inj-fix}$\Rightarrow$\eqref{it:phi-inj} and \eqref{it:phi-inj-chie}$\Rightarrow$\eqref{it:phi-inj-fix}.
The implication \eqref{it:phi-inj-fix}$\Rightarrow$\eqref{it:phi-inj} follows from (a) and Corollary~\ref{cor:eiut} because $\Kk(l^2(P))$ is an essential ideal in $B(l^2(P))$, hence in
$\Tt(G, P)$.

For the implication \eqref{it:phi-inj-chie}$\Rightarrow$\eqref{it:phi-inj-fix},
suppose $\varphi(p_e)\not=0$.
For every $x\in P$
we have $T_xp_e=p_xT_x$,
and $T_x$ is an isometry,
so
$\varphi(T_x)\varphi(p_e)\not=0$.
Then $\varphi(p_x)\varphi(T_x)\not=0$,
and hence $\varphi(p_x)\not=0$. By linearity and density it follows that $\varphi(M_f)\not=0$ for all $f\in c_0(\iota(P))\subset \Dd$.
Since $c_0(\iota(P))$ is an essential ideal in $\Dd$, it follows that $\varphi$ is injective on $\Dd$.
\end{proof}

\begin{rem}
In the above proof of Theorem~6.3, we appealed to Corollary~4.3 for the
implication (2) $\Rightarrow$ (1), and then for (3) $\Rightarrow$ (2) we
employed an elementary argument. In fact, however, in this particular case we
can prove (3) $\Rightarrow$ (1) directly, as follows. We have
mentioned that FESSPE guarantees $\Kk(l^2(P))\subset \Tt(G,P)$,
and then (3) implies that $\varphi$ is nonzero
on the simple, essential ideal $\Kk(l^2(P))$, and hence is faithful.
Nevertheless, we wanted to show how the method involving Corollary~4.3 can be
applied, because we feel that it will be useful more generally.
\end{rem}

A result similar to Theorem~\ref{thm:describe-inner-Nica-alg} (a) can be proved for $C^*(G, P)$, as follows.

\begin{cor}\label{cor:6.8}
In the notation of Lemma~\ref{prop:spectral-stuff}, the set
\begin{equation}\label{eq:ideal-of-full-algebra}
\mathcal{J}:=\clsp\{v_xe_ev_y^*: x,y\in P\}
\end{equation}
is a $\delta$-invariant ideal of $C^*(G, P)$, and $\delta\vert_{\mathcal{J}}$ is an inner coaction.
\end{cor}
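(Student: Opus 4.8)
The plan is to mirror, step for step, the proof of part (a) of Theorem~\ref{thm:describe-inner-Nica-alg}, transporting everything from the reduced algebra $\Tt(G,P)$ to the universal algebra $C^*(G,P)$ and replacing the projections $p_x=\lambda_T(1_{\{x\}})$ by the projections $e_x$ of Lemma~\ref{prop:spectral-stuff}. The engine is that the monomials $v_xe_ev_y^*$ behave like a system of matrix units: using $v_se_e=e_sv_s$ (the relation \eqref{need-delta-inner} with $y=e$), mutual orthogonality of the $e$'s, and $v_y^*v_y=1$, one first gets $e_ev_y^*v_se_e=\delta_{y,s}\,e_e$, and hence
\[
(v_xe_ev_y^*)(v_se_ev_t^*)=\delta_{y,s}\,v_xe_ev_t^*.
\]
In particular $\mathcal J$ is a $*$-subalgebra, with $(v_xe_ev_y^*)^*=v_ye_ev_x^*\in\mathcal J$, so $\mathcal J$ is self-adjoint.

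Next I would establish that $\mathcal J$ is an ideal and is $\delta$-invariant. Since $C^*(G,P)$ is generated as a $*$-algebra by $\{v_p:p\in P\}$ and $\mathcal J$ is closed, it suffices to check that left multiplication by $v_p$ and by $v_p^*$ carries $\mathcal J$ into itself. The first is immediate, $v_p(v_xe_ev_y^*)=v_{px}e_ev_y^*$. For the second, write $v_xe_e=e_xv_x$ and use $v_q^*e_x=e_{q^{-1}x}v_q^*$ (that is \eqref{need-delta-inner} applied to $v_q^*\in C^*(G,P)_{q^{-1}}$) to get $v_q^*v_xe_e=e_{q^{-1}x}v_q^*v_x$; here $e_{q^{-1}x}=0$ unless $q\le x$, while if $q\le x$ then $v_q^*v_x=v_{q^{-1}x}$ and $e_{q^{-1}x}v_{q^{-1}x}=v_{q^{-1}x}e_e$, so $v_q^*(v_xe_ev_y^*)$ equals $v_{q^{-1}x}e_ev_y^*$ or $0$, in either case lying in $\mathcal J$. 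Self-adjointness then upgrades this to a two-sided ideal. For invariance, note that $e_e=1_{\{e\}}$ lies in $c_0(P)\subseteq B_P=C^*(G,P)^\delta$ by Lemma~\ref{lem:essential-ideal}, so $\delta(e_e)=e_e\otimes 1$; together with $\delta(v_x)=v_x\otimes x$ this yields $\delta(v_xe_ev_y^*)=(v_xe_ev_y^*)\otimes xy^{-1}$ and hence $\delta(\mathcal J)\subseteq\mathcal J\otimes C^*(G)$. The nondegeneracy $\clsp\delta(\mathcal J)(1\otimes C^*(G))=\mathcal J\otimes C^*(G)$ follows from the identity $(v_xe_ev_y^*)\otimes z=\delta(v_xe_ev_y^*)(1\otimes yx^{-1}z)$, exactly as in the proof of Theorem~\ref{thm:describe-inner-Nica-alg}(a), so $\delta\vert_{\mathcal J}$ is a coaction and $\mathcal J$ is a $\delta$-invariant ideal.

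It then remains to show $\delta\vert_{\mathcal J}$ is inner, for which I would invoke Remark~\ref{rmk:inner-via-projections}. Regarding each $e_y$ as a multiplier of $\mathcal J$ (legitimate since $\mathcal J$ is an ideal of $C^*(G,P)$ and $e_y\in C^*(G,P)$), the family $\{e_y\}_{y\in G}$ consists of mutually orthogonal projections satisfying $a_xe_y=e_{xy}a_x$ for every $a_x\in\mathcal J_x=\mathcal J\cap C^*(G,P)_x$, by \eqref{need-delta-inner}. To see $\sum_{y}e_y=1$ strictly in $M(\mathcal J)$, I would check that the partial sums act as the identity on each matrix unit: from $v_se_e=e_sv_s$ and $e_ev_t^*=v_t^*e_t$ one gets $e_y(v_se_ev_t^*)=\delta_{y,s}\,v_se_ev_t^*$ and $(v_se_ev_t^*)e_y=\delta_{y,t}\,v_se_ev_t^*$, so for finite $F'\subseteq P$ the contraction $\sum_{x\in F'}e_x$ fixes $v_se_ev_t^*$ as soon as $s,t\in F'$; density of the span of the matrix units and uniform boundedness of the partial sums give strict convergence. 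Remark~\ref{rmk:inner-via-projections} then delivers $\delta\vert_{\mathcal J}=\delta^\mu$ with $\mu:c_0(G)\to M(\mathcal J)$ determined by $\mu(\Chi_y)=e_y$, proving innerness.

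The main obstacle is the strict convergence $\sum_y e_y=1$ in $M(\mathcal J)$. In Theorem~\ref{thm:describe-inner-Nica-alg}(a) the corresponding sum converged in the weak operator topology on the concrete algebra $B(l^2(P))$, whereas here one works inside the multiplier algebra of the abstract ideal $\mathcal J$ and must argue strictly; the matrix-unit relations above reduce this to a routine verification, so no genuinely new difficulty arises beyond the bookkeeping of transferring the reduced-picture argument to the universal algebra.
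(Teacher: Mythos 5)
Your proposal is correct and takes essentially the same route as the paper's proof: both establish that $\mathcal{J}$ is an ideal and is $\delta$-invariant via the relation \eqref{need-delta-inner} together with Nica covariance of $v$, and both deduce innerness of $\delta\vert_{\mathcal{J}}$ from Lemma~\ref{prop:spectral-stuff} through Remark~\ref{rmk:inner-via-projections}. Your only departures are cosmetic: you check multiplication by the generators $v_p, v_p^*$ rather than by full monomials $v_sv_t^*$, and you spell out the strict convergence $\sum_{y} e_y = 1$ in $M(\mathcal{J})$ via the matrix-unit relations, a verification the paper leaves implicit.
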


\begin{proof} Since $C^*(G, P)=\clsp\{v_sv_t^*: s,t\in P\}$, it suffices to show that $\mathcal{J}$ is a subalgebra of $C^*(G, P)$ and that $v_sv_t^*v_xe_ev_y^*$ and $v_xe_ev_y^*v_sv_t^*$
are in $\mathcal{J}$ for all $s,t,x,y\in P$. By \eqref{need-delta-inner}, $v_t^*v_xe_e=v_t^*e_xv_x=0$ unless $t^{-1}x\in P$, in which
case $v_sv_t^*v_xe_ev_y^*=v_sv_{t^{-1}x}e_ev_y^*\in \mathcal{J}$. If $y\vee s=\infty$, Nica covariance of $v$ implies that $v_y^*v_s=0$. Otherwise $v_y^*v_s=v_{y^{-1}(y\vee s)}v_{s^{-1}(y\vee s)}^*$, and  \eqref{need-delta-inner} implies that $e_ev_{y^{-1}(y\vee s)}=0$ unless
$y^{-1}(y\vee s)=e$. If $y\vee s=y$ it follows that $v_xe_ev_y^*v_sv_t^*=v_xe_ev_ {t(s^{-1}y)}^*\in \mathcal{J}$. Now clearly
$\mathcal{J}$ is closed under taking adjoints, and by the previous computations it follows that $v_xe_ev_y^*v_se_ev_t^*$ is
zero unless $s=y$, in which case it equals $v_xe_ev_t^*$, so it lies in $\mathcal{J}$.

That $\mathcal{J}$ is $\delta$-invariant follows as in the proof of part (a) of Theorem~\ref{thm:describe-inner-Nica-alg}  because
 $\delta(v_xe_ev_y^*)=v_xe_ev_y^*\otimes xy^{-1}$ and $(v_xe_ev_y^*)\otimes z=(v_xe_ev_y^*\otimes xy^{-1})(1\otimes yx^{-1}z)$. Hence Lemma~\ref{prop:spectral-stuff} implies that $\delta\vert_{\mathcal{J}}$ is $\delta^\mu$, and therefore is an inner coaction.
\end{proof}

We obtain an essential-inner uniqueness theorem for $C^*(G, P)$ if
the ideal
$\mathcal{J}$
of \eqref{eq:ideal-of-full-algebra}
is  essential.

\begin{cor}
If the ideal
$\mathcal{J}$
of \eqref{eq:ideal-of-full-algebra}
is  essential in $C^*(G, P)$,
then
$C^*(G, P)$ has the essential-inner uniqueness property of \corref{cor:eiut}.
This holds in particular if $(G, P)$ has the approximation property for positive definite functions in the sense of Nica.
\end{cor}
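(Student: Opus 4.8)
The plan is to obtain the first assertion as an immediate consequence of the two preceding corollaries, and then to verify the essentiality of $\mathcal{J}$ under the approximation property by transporting the essentiality of the compact operators across Nica's amenability isomorphism.

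For the first assertion, I would note that Corollary~\ref{cor:6.8} already provides a $\delta$-invariant ideal $\mathcal{J}$ of $C^*(G, P)$ on which $\delta$ restricts to an inner coaction. Thus, once $\mathcal{J}$ is assumed essential, the coaction $(C^*(G, P), \delta)$ together with the ideal $I = \mathcal{J}$ meets exactly the hypotheses of Corollary~\ref{cor:eiut}. Applying that corollary, and recalling that $C^*(G, P)^\delta = B_P$, yields that a homomorphism $\varphi : C^*(G, P) \to B$ is injective if and only if $\varphi\vert_{B_P}$ is injective; this is precisely the essential-inner uniqueness property of Corollary~\ref{cor:eiut}. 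This step is formal and presents no difficulty.

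For the second assertion, I would first invoke Nica's result from \cite{N} that the approximation property for positive definite functions forces $(G, P)$ to be amenable, so that $\lambda_T : C^*(G, P) \to \Tt(G, P)$ is an isomorphism. It then remains to identify $\lambda_T(\mathcal{J})$. Since $\lambda_T(v_x) = T_x$ and $\lambda_T(e_e) = \lambda_T(1_{\{e\}}) = p_e$, we have $\lambda_T(v_x e_e v_y^*) = T_x p_e T_y^*$ for $x, y \in P$, and the computation already carried out in the proof of Theorem~\ref{thm:describe-inner-Nica-alg} shows that the closed span of these monomials is exactly $\Kk(l^2(P))$; hence $\lambda_T(\mathcal{J}) = \Kk(l^2(P))$. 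Since $\Kk(l^2(P))$ is an essential ideal of $B(l^2(P))$, and therefore of the intermediate algebra $\Tt(G, P)$, the isomorphism $\lambda_T$ carries $\mathcal{J}$ onto an essential ideal, so $\mathcal{J}$ is essential in $C^*(G, P)$ and the first assertion applies. The only point requiring care is the identification $\lambda_T(\mathcal{J}) = \Kk(l^2(P))$ together with citing the correct formulation of Nica's amenability criterion; both are routine given the earlier computations, so I expect no serious obstacle.
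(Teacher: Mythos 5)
Your proposal is correct, and the first assertion is handled exactly as in the paper (a formal application of Corollary~\ref{cor:6.8} and Corollary~\ref{cor:eiut}); but for the second assertion you take a genuinely different route. The paper never passes through the isomorphism $\lambda_T$: instead it cites Laca's observation that under Nica's approximation property every ideal $\mathcal{I}$ of $C^*(G,P)$ is ``induced'' in the sense that $\mathcal{I}=\{X\in C^*(G,P):\Phi(X^*X)\in\Phi(\mathcal{I})\}$ for the conditional expectation $\Phi$ onto $B_P$, which is faithful by Nica; then for nonzero $\mathcal{I}$ the ideal $\Phi(\mathcal{I})$ of $B_P$ is nonzero, so by Lemma~\ref{lem:essential-ideal} (essentiality of $c_0(P)$ in $B_P$) it contains some projection $e_x$, whence $e_x\in\mathcal{I}\cap\mathcal{J}$ and $\mathcal{J}$ is essential. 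You instead invoke Nica's theorem that the approximation property implies amenability, so $\lambda_T$ is an isomorphism, identify $\lambda_T(\mathcal{J})=\clsp\{T_xp_eT_y^*\}=\Kk(l^2(P))$ using the computation from the proof of Theorem~\ref{thm:describe-inner-Nica-alg}, and pull back the essentiality of the compacts in $\Tt(G,P)$; this is valid, including the step that $\Kk(l^2(P))$, being essential in $B(l^2(P))$, is essential in the intermediate algebra $\Tt(G,P)$ (note that $\Kk(l^2(P))\subset\Tt(G,P)$ and the nontriviality of $e_e$ both rest on the standing \fp\ hypothesis carried by Lemma~\ref{prop:spectral-stuff}, which is in force since $\mathcal{J}$ is defined in its notation). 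Each approach buys something: yours is more concrete, shows as a byproduct that $\mathcal{J}\cong\Kk(l^2(P))$, and in fact only needs amenability of $(G,P)$ (faithfulness of $\lambda_T$), a formally weaker hypothesis than the approximation property; the paper's argument avoids the spatial identification altogether and instead exercises the induced-ideal technique (ideals detected by a faithful conditional expectation onto the diagonal), which is the machinery the paper develops further in its closing remark on induced ideals.
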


\begin{proof}
If $\mathcal{J}$ is essential, then the conclusion follows immediately from \corref{cor:6.8} and \corref{cor:eiut}.
For the other part, note that,
as remarked in [Lac99], if  $(G, P)$ has the approximation property of Nica then for every ideal $\mathcal{I}$ of $C^*(G, P)$
we have $\mathcal{I}=\{X\in C^*(G, P): \Phi(X^*X)\in \Phi(\mathcal{I})\}$, where $\Phi$ is the conditional expectation  from $C^*(G, P)$ into $B_P$, see \cite[Corollaries 2.4 and 3.3]{LacR1}. Now $\Phi$ is faithful by \cite[\S 4.3 and 4.5]{N}.
Therefore, if $\mathcal{I}$ is non-trivial then by faithfulness of $\Phi$ also $\Phi(\mathcal{I})$ is non-trivial as an ideal of $B_P$. By Lemma~\ref{lem:essential-ideal} there exists  $e_x\in c_0(P)$
such that $e_x\in \Phi(\mathcal{I})$. It follows that  $e_x\in\mathcal{I}$, so $\mathcal{I}\cap \mathcal{J}$ is non-trivial, and hence $\mathcal{J}$ is essential.
\end{proof}

The next result is a converse to Theorem~\ref{thm:describe-inner-Nica-alg}.

\begin{theorem}\label{thm:inner-to-FES} Suppose there is a family $\{q_x: x\in G\}\subset \Tt(G, P)$ of mutually orthogonal
projections such that
\begin{enumerate}
\item $q_y\in \Dd'$ and $T_pq_y=q_{py}T_p$ for all $y\in G$ and $p\in P$, and
\item $\sum_{y\in G}q_y=I$ in the weak operator topology of $B(l^2(P))$.
\end{enumerate}
Then $(G, P)$ has a \fp.
\end{theorem}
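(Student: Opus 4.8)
The plan is to determine exactly how the projections $q_x$ act on the canonical orthonormal basis $\{\varepsilon_t\}_{t\in P}$ of $l^2(P)$, show that precisely one of them is the rank-one projection $p_e=\varepsilon_e\otimes\varepsilon_e$, and then invoke the equivalence from \cite[Proposition 6.3]{N} that $\Kk(l^2(P))\subset\Tt(G,P)$ is the same as having a \fp. The whole argument runs on the two hypotheses (1) and (2); interestingly, the condition $q_y\in\Dd'$ is not needed, since the covariance relation already forces each $q_z$ to be diagonal.

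First I would pin down $q_z\varepsilon_e$. Taking adjoints in the covariance relation of (1) gives $q_yT_p^*=T_p^*q_{py}$, and since $T_p^*\varepsilon_e=0$ for every $p\in P\minus\{e\}$ (because $p\not\leq e$), applying this to $\varepsilon_e$ yields $T_p^*q_{py}\varepsilon_e=0$. Reindexing $z=py$, which for fixed $p$ is a bijection of $G$, shows $T_p^*q_z\varepsilon_e=0$ for all $z\in G$ and all $p\in P\minus\{e\}$, hence $q_z\varepsilon_e\perp\varepsilon_p=T_p\varepsilon_e$ for every such $p$. Therefore $q_z\varepsilon_e\in\CC\varepsilon_e$, say $q_z\varepsilon_e=c_z\varepsilon_e$. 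Because each $q_z$ is a projection, $c_z=\langle q_z\varepsilon_e,\varepsilon_e\rangle=\norm{q_z\varepsilon_e}^2\in\{0,1\}$; mutual orthogonality forces $c_z\overline{c_{z'}}=0$ for $z\neq z'$, while condition (2) evaluated at $\varepsilon_e$ gives $\sum_z c_z=1$. Thus there is a unique $z_0\in G$ with $c_{z_0}=1$ and $c_z=0$ otherwise. I expect this confinement of $q_z\varepsilon_e$ to the line $\CC\varepsilon_e$ to be the only genuinely delicate point; everything afterward is propagation and bookkeeping.

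Next I would extend this to an arbitrary basis vector. Writing $\varepsilon_r=T_r\varepsilon_e$ and using $q_zT_r=T_rq_{r^{-1}z}$ (relation (1) with $p=r$, $y=r^{-1}z$), I compute $q_z\varepsilon_r=T_rq_{r^{-1}z}\varepsilon_e=c_{r^{-1}z}\varepsilon_r$ for all $r\in P$ and $z\in G$. Consequently each $q_z$ is diagonal, equal to $\varepsilon_{zz_0^{-1}}\otimes\varepsilon_{zz_0^{-1}}$ when $zz_0^{-1}\in P$ and to $0$ otherwise; in particular $q_{z_0}=\varepsilon_e\otimes\varepsilon_e=p_e$.

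Finally, since $q_{z_0}\in\Tt(G,P)$ by hypothesis, we obtain $p_e\in\Tt(G,P)$. The operators $T_xp_eT_y^*$ for $x,y\in P$ are exactly the matrix units of $B(l^2(P))$, as noted in the proof of \cite[Proposition 6.3]{N}, so their closed span is $\Kk(l^2(P))$; hence $\Kk(l^2(P))\subset\Tt(G,P)$. By \cite[Proposition 6.3]{N} this is equivalent to $(G,P)$ having a \fp, which completes the proof.
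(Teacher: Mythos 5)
Your proof is correct, and it takes a genuinely different route from the paper's. The paper uses hypothesis (1) in full: it first proves \lemref{lem:Toe-mult-free} (that $\Dd'\subset l^\infty(P)$), writes each $q_y$ as a multiplication operator $M_{\Chi_{E(y)}}$ with $E(y)\subset P$, shows via the covariance relation that $pE(y)=E(py)$, and then argues combinatorially on the sets $E(y)$ to conclude $q_e=p_e$; from there it gets $\Kk(l^2(P))\subset\Tt(G,P)$ and cites \cite[Proposition 6.3]{N}, exactly as you do at the end. You instead extract everything from the covariance relation and the summability condition alone, by a direct vector computation at $\varepsilon_e$: the adjointed relation $q_yT_p^*=T_p^*q_{py}$ together with $T_p^*\varepsilon_e=0$ for $p\in P\setminus\{e\}$ confines each $q_z\varepsilon_e$ to $\CC\varepsilon_e$, and propagation along $\varepsilon_r=T_r\varepsilon_e$ then identifies every $q_z$ as a diagonal projection of rank at most one. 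Your route buys two things. First, as you observe, the hypothesis $q_y\in\Dd'$ is never used, so you prove a formally stronger statement and render \lemref{lem:Toe-mult-free} dispensable for this theorem. Second, your bookkeeping is actually more careful than the paper's on one point: the hypotheses are invariant under the relabelling $q_y\mapsto q_{yz_0}$ for fixed $z_0\in G$, so one can only conclude that $q_{z_0}=p_e$ for a unique $z_0\in G$, not that $q_e=p_e$ as the paper's proof asserts. Indeed, the paper's claim that every $E(y)$ is nonempty can fail --- for $(G,P)=(\ZZ,\NN)$ the shifted family $q_y=\varepsilon_{y+1}\otimes\varepsilon_{y+1}$ for $y\geq -1$ and $q_y=0$ otherwise satisfies (1) and (2) but has $q_e=\varepsilon_1\otimes\varepsilon_1\neq p_e$ and $E(y)=\emptyset$ for $y\leq -2$. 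Since $p_e\in\Tt(G,P)$ is all that is needed to capture the matrix units $T_xp_eT_y^*$, your version quietly closes this small gap while reaching the same conclusion.
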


To prove this theorem we will need a lemma.

\begin{lemma}\label{lem:Toe-mult-free}
Let $(G, P)$ be a quasi-lattice ordered group and $\Dd$ the diagonal
subalgebra of $\Tt(G, P)$. Then the commutant $\Dd'$ is contained in $l^\infty(P)$.
\end{lemma}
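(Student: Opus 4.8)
The plan is to work with the concrete realization of $\Dd$ as diagonal operators on $l^2(P)$ and to show that anything in the commutant, taken in $B(l^2(P))$, must itself be diagonal, hence a bounded multiplication operator, i.e.\ an element of $l^\infty(P)$.

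First I would pin down the generators of $\Dd$ explicitly. Since $T_s\varepsilon_t=\varepsilon_{st}$, the operator $P_s:=T_sT_s^*$ is the orthogonal projection onto $\clsp\{\varepsilon_u:u\in sP\}=\clsp\{\varepsilon_u:s\le u\}$; in particular every $P_s$ is diagonal with respect to the basis $\{\varepsilon_t\}_{t\in P}$, with $P_s\varepsilon_u=\varepsilon_u$ when $s\le u$ and $P_s\varepsilon_u=0$ otherwise. For $S\in\Dd'$ the only hypothesis I use is that $SP_s=P_sS$ for every $s\in P$.

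The crux is a separation step powered by the quasi-lattice axiom $P\cap P^{-1}=\{e\}$. Given distinct $u,v\in P$, at least one of $u\not\le v$ or $v\not\le u$ must hold, since $u\le v$ and $v\le u$ together would give $u^{-1}v\in P\cap P^{-1}=\{e\}$ and hence $u=v$. Say $u\not\le v$ (the other case is symmetric, with the roles of $u,v$ interchanged); then taking $s=u$ we have $P_u\varepsilon_u=\varepsilon_u$ while $P_u\varepsilon_v=0$, so
\[
\langle S\varepsilon_v,\varepsilon_u\rangle=\langle S\varepsilon_v,P_u\varepsilon_u\rangle=\langle P_uS\varepsilon_v,\varepsilon_u\rangle=\langle SP_u\varepsilon_v,\varepsilon_u\rangle=0,
\]
using self-adjointness of $P_u$ and the commutation relation. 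Thus every off-diagonal matrix coefficient of $S$ vanishes, so $S$ is diagonal. Finally $|\langle S\varepsilon_u,\varepsilon_u\rangle|\le\|S\|$ shows the diagonal is bounded, and $S$ is multiplication by the function $u\mapsto\langle S\varepsilon_u,\varepsilon_u\rangle\in l^\infty(P)$, as required.

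I do not expect a genuine obstacle: the argument amounts to the observation that the projections $\{P_s\}$ separate the basis vectors, which is precisely what $P\cap P^{-1}=\{e\}$ guarantees. The two points worth stating carefully are that the commutant is computed inside $B(l^2(P))$ (so that arbitrary off-diagonal coefficients of a bounded operator are at our disposal to be tested against the $P_s$), and that $s\le u$ means by definition $s^{-1}u\in P$, i.e.\ $u\in sP$, which is what makes $P_s$ the indicator projection onto the up-set of $s$.
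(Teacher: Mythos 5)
Your proof is correct and is essentially the paper's own argument: the paper also tests an element of the commutant against the projections $T_sT_s^*$ (there written $M_{1_s}$) and uses $P\cap P^{-1}=\{e\}$ in the same two-case way, phrasing the computation in terms of the supports of the vectors $M\varepsilon_p$ rather than matrix coefficients $\langle S\varepsilon_v,\varepsilon_u\rangle$. The boundedness step via $\|S\|$ is likewise identical.
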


\begin{proof}
Let $M\in \Dd'\subset B(l^2(P))$.  We claim that there is $g\in l^\infty(P)$ such that $M=M_g$.
For each $p\in P$ define $f_p:=M\varepsilon_p$ in $l^2(P)$. Using that $MM_{1_p}=M_{1_p}M$
implies that $f_p=M_{1_p}f_p$, and therefore $f_p$ has support included in $\{t\in P: p\leq t\}$.
On the other hand, if $p\leq t$ and $p\neq t$, then the commutation relation $MM_{1_t}=M_{1_t}M$
implies that $M_{1_t}f_p=0$, showing that $f_p$ has support the single point $\{p\}$. Thus there is
$g:P\to \CC$ such that $M\varepsilon_p=g(p)\varepsilon_p$ for all $p\in P$. Since $\Vert M\varepsilon_p\Vert_2\leq \Vert M\Vert$
for all $p\in P$, it follows that $\vert g(p)\vert\leq \Vert M\Vert$ for all $p\in P$. This means $g\in l^\infty$. The claim,
hence the lemma, are proved.
\end{proof}

\begin{proof}[Proof of Theorem~\ref{thm:inner-to-FES}.]
We  will
show that $\Tt(G, P)\supseteq \Kk(l^2(P))$, and then apply \cite[Proposition 6.3]{N} to
conclude that $(G, P)$ has a \fp.

It suffices to show that $\Tt(G, P)$ contains all rank one projections $\varepsilon_y\otimes
\varepsilon_x$  on $l^2(P)$, where $x,y\in P$.  For this it suffices to establish that $q_e=p_e$,
because then we will have $\varepsilon_y\otimes\varepsilon_x=T_xq_eT_y^*$ as in the proof of Theorem~\ref{thm:describe-inner-Nica-alg}.

By assumption (1), $T_pT_p^*q_y=q_yT_pT_p^*$ for all $p\in P$ and  $y\in G$. Thus  $q_y\in\Dd'$ for all $y\in G$, and so
$q_y\in l^\infty(P)$ by Lemma~\ref{lem:Toe-mult-free}. Write $q_y=M_{\Chi_{E(y)}}$  where $\emptyset\neq E(y)\subset P$ for every $y\in G$.
Since $\sum_{y\in G}q_y=I$, the family $\{E(y)\}_{y\in G}$ is a mutually disjoint family such that $P=\cup_{y\in P}E(y)$. We claim that
\begin{equation}\label{eq:Et-single}
E(y)=\{y\} \text{ for all }y\in P.
\end{equation}
Towards the claim, we prove first that $pE(y)=E(py)$ for all $p, y\in P$. By assumption (1), $T_pM_{\Chi_{E(y)}}=M_{\Chi_{E(py)}}T_p$
for $p\in P$. Applying both sides to $\varepsilon_u$ gives
$$
\begin{cases}\varepsilon_{pu}&\text{ if }u\in E(y)\\
0&\text{ if }u \notin E(y)\end{cases}=\begin{cases} \varepsilon_{pu}&\text{ if }pu\in E(py)\\
0&\text{ if }pu\notin E(py)
\end{cases}
$$
when $p,u, y\in P$. Thus it suffices to prove \eqref{eq:Et-single} when $y=e$. Let
$y\in P$ such that $e\in E(y)$. Then $e\in E(y)=yE(e)\subseteq yP$. This forces $y\in P\cap P^{-1}$, so $y=e$. Hence $e\in E(e)$,
which also implies $p\in E(p)$ for all $p\in P$.
If $p\in E(e)$, then $p\in E(p)\cap E(e)$. This intersection
is non-empty precisely when $p=e$. In other words, we have established $E(e)=\{e\}$, from
which \eqref{eq:Et-single} and hence the theorem follow.
 \end{proof}

\begin{rmk}
It was asserted in \cite[\S 6.3, Remark 4]{N} that $\Kk(l^2(P))$ is an induced ideal
from $\Dd$ when $(G, P)$ has a \fp. However, no proof was given of this claim. Here we show that $\Kk(l^2(P))$ is
contained in the ideal of $\Tt(G, P)$ induced from $c_0(P)$. We conjecture that the two
are equal, but we have not been able to prove this.

To recall terminology, let $(G, P)$ be a quasi-lattice ordered group. Let $\Phi$ be the conditional expectation from $C^*(G, P)$
onto $B_P$ constructed in \cite{LacR1} and $\Phi^\normal$ the conditional expectation from
$\Tt(G, P)$ to $\Dd$ associated to the coaction
$\delta^\normal$ of Proposition~\ref{prop:normalising-delta}. The representation
$\lambda_T$ intertwines $\Phi$ and $\Phi^\normal$. Since $\delta^\normal$ is
normal, $\Phi^\normal$ is faithful on positive elements. In \cite[\S 6]{N} Nica associates to an invariant ideal $I$ in $\Dd$
the induced ideal  $\Ind{I}=\{X\in\Tt(G, P): \Phi^\normal(X^*X)\in \Dd\}$ in $\Tt(G, P)$.

Suppose $(G, P)$ has a \fp.  Lemma~\ref{lem:essential-ideal} says that
$\Ii:=c_0(\iota(P))$ is an essential ideal in
$\Dd$. Further, $\Ii$ is generated by the projections
$$
p_y=\prod_{a\in F} (T_yT_y^*-T_{ya}T_{ya}^*),
$$
for all $y\in P$. We claim that $\Ii$ is invariant in Nica's sense. To see this, let $x\in G$ and write it as $x=\sigma(x)\tau(x)^{-1}$
with $\sigma(x)\in P$ the least upper bound of $x$. For $y\in P$,  equation~\eqref{need-delta-inner} implies that
$$
T_{\sigma(x)}T_{\tau(x)}^*p_y (T_{\sigma(x)}T_{\tau(x)}^*)^*=p_{xy}T_{\sigma(x)}T_{\tau(x)}^*T_{\tau(x)}T_{\sigma(x)}^*,
$$
which is $p_{xy}T_{\sigma(x)}T_{\sigma(x)}^*$, and lies in $\Ii$ because $\Ii$ is an ideal in $\Dd$.
Since $p_y$ span $\Ii$, the ideal $\Ii$ is indeed invariant.

Now the rank-one operator on $l^2(P)$ taking $\varepsilon_y$ to $\varepsilon_x$ is $X=T_xp_eT_y^*$ and
$$
\Phi^\normal(X^*X)=T_yp_eT_y^*=p_y\in \Ii,
$$
so $\Ind \Ii$ contains all rank-one operators in $B(l^2(P))$. Hence $\Kk(l^2(P))\subset \Ind \Ii$.
\end{rmk}

\appendix
\section{Gauge-invariant uniqueness for Fell bundles}
 \label{appendix}

Here we present an abstract ``gauge-invariant uniqueness'' result for Fell bundles over discrete groups.
As applications we obtain gauge-invariant uniqueness results for maximal and for normal coactions.

\begin{prop}
\label{GIUTFB}
If $\pi:(A,\delta)\to (B,\varepsilon)$ is a surjective morphism of coactions such that
$\pi|_{A_e}$ is injective, then
\[
\pi\times G:A\times_\delta G\to B\times_\varepsilon G
\]
is an isomorphism.
Consequently:
\begin{enumerate}
\item\label{it:epsilon-max} if $\varepsilon$ is maximal, then $\delta$ is maximal and $\pi$ is an
isomorphism;

\item\label{it:delta-normal} if $\delta$ is normal, then $\varepsilon$ is normal and $\pi$ is an
isomorphism;

\item\label{it:delta-max} if $\delta$ is maximal, then $\pi$ is a maximalization of $(B,\varepsilon)$, and there is a unique morphism $\varphi:(B,\varepsilon)\to (A^n,\delta^n)$ such that
the diagram
\begin{equation}\label{A.1(3)}
\xymatrix{
(A,\delta) \ar[dr]^\pi \ar[dd]_{q_A^n}
\\
&(B,\varepsilon) \ar@{-->}[dl]^\varphi_{!}
\\
(A^n,\delta^n)
}
\end{equation}
commutes,
and moreover $\varphi$ is a normalization.

\item\label{it:epsilon-normal} if $\varepsilon$ is normal, then $\pi$ is a normalization of $(A,\delta)$, and there is a unique morphism $\varphi:(B^m,\varepsilon^m)\to (A,\delta)$ such that
the diagram
\begin{equation}\label{A.1(4)}
\xymatrix{
(B^m,\varepsilon^m) \ar@{-->}[dr]^\varphi_{!} \ar[dd]_{q_A^m}
\\
&(A,\delta) \ar[dl]^\pi
\\
(B,\varepsilon)
}
\end{equation}
commutes,
and moreover $\varphi$ is a maximalization.
\end{enumerate}
\end{prop}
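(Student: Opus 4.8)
The plan is to establish the displayed isomorphism first, and then obtain the four numbered consequences by formal manipulations with the maximalization and normalization functors. For the displayed statement I would first pass from $\pi$ to the induced homomorphism of Fell bundles $\tilde{\pi}\colon\Aa\to\Bb$ and argue that it is an \emph{isomorphism}. Surjectivity of $\pi$ gives $\pi(A_s)=B_s$ for every $s\in G$: equivariance makes $\pi$ intertwine the norm-one projections of \eqref{def:delta_s}, so $\pi(A_s)=\pi(\delta_s(A))=\delta_s(\pi(A))=\delta_s(B)=B_s$. Injectivity of $\pi|_{A_e}$ then upgrades each $\pi|_{A_s}$ to an isometry by the $C^*$-identity: for $a\in A_s$ we have $a^*a\in A_e$, whence $\norm{\pi(a)}^2=\norm{\pi(a^*a)}=\norm{a^*a}=\norm{a}^2$. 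Thus $\tilde{\pi}$ is bijective, isometric, multiplicative and $*$-preserving, i.e.\ an isomorphism of Fell bundles.

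The second, and to my mind principal, step is to transport this bundle isomorphism to the crossed products, using that the coaction crossed product depends only on the underlying Fell bundle. Concretely, $\tilde{\pi}$ induces an isomorphism $(C^*(\Aa),\delta_\Aa)\to(C^*(\Bb),\delta_\Bb)$ of the associated maximal coactions, hence an isomorphism of crossed products after applying $\,\cdot\times G$. I would then compare this with $\pi\times G$ through the canonical maximalizations $q_A^m\colon(C^*(\Aa),\delta_\Aa)\to(A,\delta)$ and $q_B^m\colon(C^*(\Bb),\delta_\Bb)\to(B,\varepsilon)$: the square with horizontal arrows $q_A^m,q_B^m$, left vertical the bundle-induced isomorphism, and right vertical $\pi$ commutes, since both composites restrict to $\tilde{\pi}$ on the dense span of the spectral subspaces, and morphisms of coactions are determined there. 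Applying $\,\cdot\times G$, the horizontal arrows become isomorphisms because maximalizations induce crossed-product isomorphisms, and the left vertical arrow is an isomorphism by the preceding sentence; hence $\pi\times G$ is an isomorphism. The main obstacle is precisely this naturality bookkeeping, namely checking that the relevant square commutes and invoking functoriality of $C^*(\cdot)$ and of $\,\cdot\times G$ correctly.

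For parts \ref{it:epsilon-max} and \ref{it:delta-normal} I would argue by uniqueness of maximalizations and normalizations. If $\varepsilon$ is maximal, then $\pi\circ q_A^m\colon(A^m,\delta^m)\to(B,\varepsilon)$ is surjective with $(\pi\circ q_A^m)\times G$ an isomorphism and maximal source, so it is a maximalization of $(B,\varepsilon)$; since $(B,\varepsilon)$ is already maximal, $\id_B$ is one too, and by uniqueness $\pi\circ q_A^m$ is an isomorphism. Then $q_A^m$ is injective, hence (being always surjective) an isomorphism, which is exactly the statement that $\delta$ is maximal, and $\pi=(\pi\circ q_A^m)\circ(q_A^m)^{-1}$ is an isomorphism. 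Part \ref{it:delta-normal} is the mirror image, using $q_B^n\circ\pi$, uniqueness of normalizations, and surjectivity of $\pi$ to pass from injectivity of $q_B^n\circ\pi$ to injectivity of $q_B^n$.

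Finally, parts \ref{it:delta-max} and \ref{it:epsilon-normal} reduce to the two cases just handled by applying the normalization, resp.\ maximalization, functor to $\pi$. For \ref{it:delta-max}, with $\delta$ maximal, $\pi$ is by definition a maximalization of $(B,\varepsilon)$; moreover the normalization $\pi^n\colon(A^n,\delta^n)\to(B^n,\varepsilon^n)$ is surjective, injective on the fixed-point algebra (as $q_A^n,q_B^n$ are isometric on $A_e,B_e$ and $\pi|_{A_e}$ is injective), and satisfies that $\pi^n\times G$ is an isomorphism, so part \ref{it:delta-normal} forces $\pi^n$ to be an isomorphism. Then $\varphi:=(\pi^n)^{-1}\circ q_B^n$ satisfies $\varphi\circ\pi=q_A^n$ and, being an isomorphism of normal coactions composed with a normalization, is itself a normalization; uniqueness is immediate since $\pi$ is surjective. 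Case \ref{it:epsilon-normal} is dual: apply the maximalization functor, use part \ref{it:epsilon-max} to see that $\pi^m\colon(A^m,\delta^m)\to(B^m,\varepsilon^m)$ is an isomorphism, set $\varphi:=q_A^m\circ(\pi^m)^{-1}$ to obtain $\pi\circ\varphi=q_B^m$ with $\varphi$ a maximalization, and deduce uniqueness from the fact that $\pi$, being a normalization, is injective on each spectral subspace.
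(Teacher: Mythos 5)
Your proposal is correct, and its skeleton matches the paper's own proof: both establish that the induced bundle map $\tilde{\pi}\colon\Aa\to\Bb$ is an isomorphism exactly as you do (the intertwining $\varepsilon_s\circ\pi=\pi\circ\delta_s$ gives $\pi(A_s)=B_s$, and injectivity on $A_e$ plus the $C^*$-identity upgrades each $\pi|_{A_s}$ to an isometry), and both then transport this through a commuting naturality square to conclude that $\pi\times G$ is an isomorphism. The differences lie in which side of the duality you use and in how the consequences are finished. For the main statement the paper works on the normalization side: it observes $A^n\cong C^*_r(\Aa)$ and $B^n\cong C^*_r(\Bb)$, so that $\pi^n$ is an isomorphism, and then the square built from $q_A^n$, $q_B^n$, $\pi$, $\pi^n$ (all of whose crossed products are isomorphisms except possibly $\pi\times G$) forces $\pi\times G$ to be one too; you instead run the mirror-image argument through the full algebras, using that $(C^*(\Aa),\delta_\Aa)\to(A,\delta)$ is a maximalization --- equally valid, since this identification is the standard dual of the fact the paper itself invokes without further comment. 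For parts (1) and (2) the paper simply cites \cite[Proposition~3.1]{clda}, whereas your argument via uniqueness of maximalizations (applied to $\pi\circ q_A^m$ and $\id_B$) and of normalizations is short and self-contained. For parts (3) and (4) your $\varphi=(\pi^n)^{-1}\circ q_B^n$, respectively $\varphi=q_A^m\circ(\pi^m)^{-1}$, is essentially the paper's $\varphi=\theta\circ q_B^n$ with $\theta=(\pi^n)^{-1}$ (the paper produces $\theta$ from uniqueness of normalizations rather than from invertibility of $\pi^n$, which its main step had in any case already established); but your uniqueness proofs are genuinely more elementary: in (3) uniqueness is immediate because two morphisms agreeing after precomposition with the surjection $\pi$ coincide, and in (4) it follows from equivariance, injectivity of $\pi$ on each spectral subspace, and density of $\spn\{(B^m)_s:s\in G\}$ in $B^m$, whereas the paper appeals to faithfulness of the maximalization and normalization functors via \cite[Corollary~6.1.19]{BKQ}. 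Your route thus buys independence from those categorical results at no cost in rigor.
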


\begin{proof}
We first show that
\[
\pi(A_s)=B_s\midtext{for all}s\in G.
\]
Indeed, it is easy to check on the generators that
\[
\varepsilon_s\circ\pi=\pi\circ\delta_s\midtext{for all}s\in G.
\]
Then we have
\begin{align*}
B_s
&=\varepsilon_s(B)
\\&=\varepsilon_s(\pi(A))
\\&=\pi(\delta_s(A))
\\&=\pi(A_s).
\end{align*}
Since $\pi|_{A_e}$ is injective, it follows that for each $s\in G$ the restriction $\pi|_{A_s}$ maps $A_s$ isometrically onto $B_s$,
and hence the associated Fell-bundle homomorphism $\wilde\pi:\Aa\to\Bb$ is an isomorphism.

The normalization
\[
\pi^n:(A^n,\delta^n)\to (B^n,\varepsilon^n)
\]
of $\pi$ is an isomorphism of coactions, because
$A^n\cong C^*_r(\Aa)$ and $B^n\cong C^*_r(\Bb)$.
Let $q^n_A:(A,\delta)\to (A^n,\delta^n)$ and $q^n_B:(B,\varepsilon)\to (B^n,\varepsilon^n)$ be the normalizing maps.

We have a commuting diagram
\[
\xymatrix{
(A,\delta) \ar[r]^-{q^n_A} \ar[d]_\pi
&(A^n,\delta^n) \ar[d]^{\pi^n}
\\
(B,\varepsilon) \ar[r]_-{q^n_B}
&(B^n,\varepsilon^n)
}
\]
of coaction morphisms, hence a commuting diagram
\[
\xymatrix{
A\times_\delta G \ar[r]^-{q^n_A\times G}_-\cong
\ar[d]_{\pi\times G}
&A^n\times_{\delta^n} G \ar[d]^{\pi^n\times G}_\cong
\\
B\times_\varepsilon G \ar[r]_-{q^n_B\times G}^\cong
&B^n\times_{\varepsilon^n} G
}
\]
of homomorphisms.
Thus $\pi\times G$ is an isomorphism.

Now (1)--(4) follow from the theory of maximalizations and normalizations:
First of all,
(1) and (2) follow immediately from \cite[Proposition~3.1]{clda}.

For (3), \cite[Proposition~6.1.11]{BKQ} shows that $\pi$ is a maximalization.
Let $q^n_B:(B,\varepsilon)\to (B^n,\varepsilon^n)$ be the normalization of $(B,\varepsilon)$.
Then $q^n_B\circ \pi:(A,\delta)\to (B^n,\varepsilon^n)$ also is a normalization, by \cite[Proposition~6.1.7]{BKQ}. Since all normalizations of $(A,\delta)$ are isomorphic, there is an isomorphism $\theta$ making the diagram
\[
\xymatrix@C+30pt{
(A,\delta) \ar[dr]^\pi \ar[dd]_{q_A^n}
\\
&(B,\varepsilon) \ar[d]^{q^n_B}
\\
(A^n,\delta^n)
&(B^n,\varepsilon^n) \ar@{-->}[l]^-\theta_-{\cong}
}
\]
commute.
Put $\varphi=\theta\circ q^n_B:(B,\varepsilon)\to (A^n,\delta^n)$.
Then $\varphi$ is a normalization since $q^n_B$ is and $\theta$ is an isomorphism,
and the diagram
\[
\xymatrix@C+30pt{
(A,\delta) \ar[dr]^\pi \ar[dd]_{q_A^n}
\\
&(B,\varepsilon) \ar[d]^{q^n_B} \ar[dl]_\varphi
\\
(A^n,\delta^n)
&(B^n,\varepsilon^n) \ar[l]^-\theta_-{\cong}
}
\]
commutes.

To see that $\varphi$ is the unique morphism making the diagram \eqref{A.1(3)} commute, suppose that $\varphi'$ is another. Since $q_A^n$ is also a maximalization (by \cite[Proposition~6.1.15]{BKQ}) it follows from the theory of maximalization that both $\varphi$ and $\varphi'$ have the same maximalization (namely $\id_A$), and hence are equal since the maximalization functor is faithful (by \cite[Corollary~6.1.19]{BKQ}).

(4) is proved similarly to (3): \cite[Proposition~6.1.7]{BKQ} shows that $\pi$ is a normalization, and if $q^m_A:(A^m,\delta^m)\to (A,\delta)$ is a maximalization then $\pi\circ q^m_A$ is also a maximalization, by \cite[Proposition~6.1.11]{BKQ}, so there is an isomorphism $\theta$ making the diagram
\[
\xymatrix@C+30pt{
(B^m,\varepsilon^m) \ar[dd]_{q_B^m} \ar@{-->}[r]^-\theta_-{\cong}
&(A^m,\delta^m) \ar[d]^{q^m_A}
\\
&(A,\delta) \ar[dl]^\pi
\\
(B,\varepsilon)
}
\]
commute.
Then $\varphi:=q^m_A\circ\theta$ is a maximalization of $(A,\delta)$ making the diagram
\[
\xymatrix@C+30pt{
(B^m,\varepsilon^m) \ar[dd]_{q_B^m} \ar[r]^-\theta_-{\cong} \ar[dr]_\varphi
&(A^m,\delta^m) \ar[d]^{q^m_A}
\\
&(A,\delta) \ar[dl]^\pi
\\
(B,\varepsilon)
}
\]
commute.

To prove that $\varphi$ is the unique morphism making the diagram \eqref{A.1(4)} commute, if $\varphi'$ is another then, since $q_B^m$ is also a normalization (by \cite[Proposition~6.1.14]{BKQ}) both $\varphi$ and $\varphi'$ have the same normalization (namely $\id_B$), and hence are equal since the normalization functor is faithful (by \cite[Corollary~6.1.19]{BKQ}).
\end{proof}

\begin{cor}[Abstract GIUT for maximal coactions]
\label{cor:giut-max-coact} Let $(A, \delta)$ be a maximal coaction and
$\pi:A\to B$  a surjective homomorphism. Then $\pi$ is
injective if and only if $\pi\vert_{A_e}$ is injective and there is a maximal
coaction $\varepsilon$ of $G$ on $B$ such that $\pi$ is $\delta-\varepsilon$ equivariant.
\end{cor}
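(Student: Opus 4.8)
The plan is to read off both implications from \propref{GIUTFB}, which already isolates the essential mechanism: a surjective, equivariant homomorphism that is injective on the fixed-point algebra induces an isomorphism $\pi\times G$ of crossed products, and this in turn forces isomorphism of the coactions under the appropriate maximality or normality hypothesis. So my strategy is not to re-prove anything about crossed products, but simply to feed the corollary's hypotheses into the right part of that proposition, and to handle the reverse implication by transport of structure.

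First I would treat the \emph{if} direction. Suppose $\pi\vert_{A_e}$ is injective and that $B$ carries a maximal coaction $\varepsilon$ for which $\pi$ is $\delta-\varepsilon$ equivariant. Since $\pi$ is surjective it is in particular nondegenerate, so $\pi:(A,\delta)\to(B,\varepsilon)$ is a \emph{surjective morphism of coactions} whose restriction to $A_e$ is injective; this is exactly the hypothesis of \propref{GIUTFB}. Because $\varepsilon$ is assumed maximal, part~\eqref{it:epsilon-max} of that proposition yields that $\pi$ is an isomorphism, hence injective. I note that this direction does not actually use the maximality of $\delta$ hypothesized in the corollary --- it instead reappears as a conclusion of part~\eqref{it:epsilon-max}.

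For the \emph{only if} direction, assume $\pi$ is injective. Combined with surjectivity this makes $\pi$ an isomorphism of $C^*$-algebras, and $\pi\vert_{A_e}$ is injective for free. I would then transport the given coaction along $\pi$, defining
\[
\varepsilon:=(\pi\otimes\id_{C^*(G)})\circ\delta\circ\pi^{-1}.
\]
A routine check shows that $\varepsilon$ is a (full) coaction of $G$ on $B$ and that $\pi$ is $\delta-\varepsilon$ equivariant, since $\varepsilon\circ\pi=(\pi\otimes\id_{C^*(G)})\circ\delta$ by construction. As maximality of a coaction is preserved under isomorphism of coactions and $(A,\delta)$ is maximal by hypothesis, $\varepsilon$ is maximal, which supplies the required target coaction.

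The argument is therefore almost entirely bookkeeping: the analytic content has been absorbed into \propref{GIUTFB}. The only point requiring genuine attention is recognizing that ``surjective, $\delta-\varepsilon$ equivariant, and injective on $A_e$'' is precisely the input of that proposition (in particular that surjectivity delivers the nondegeneracy needed to view $\pi$ as a morphism of coactions), and invoking the \emph{maximal} branch~\eqref{it:epsilon-max} rather than one of the normalization branches. I expect no serious obstacle beyond verifying that the transported map $\varepsilon$ is indeed a coaction and inherits maximality, which is immediate from functoriality of the construction.
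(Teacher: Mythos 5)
Your proposal is correct and matches the paper's own argument: the paper likewise dispatches the forward direction as immediate (your transport-of-structure construction $\varepsilon=(\pi\otimes\id)\circ\delta\circ\pi^{-1}$ is exactly what is implicit there) and proves the reverse direction by viewing $\pi$ as a surjective morphism of coactions injective on $A_e$ and invoking Proposition~\ref{GIUTFB}, part~\eqref{it:epsilon-max}. Your side observation that maximality of $\delta$ is not needed as a hypothesis in that direction, since it reappears as a conclusion of part~\eqref{it:epsilon-max}, is also accurate.
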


\begin{proof} The forward direction is immediate. Assume now that $\pi\vert_{A_e}$ is injective
and there is a maximal coaction $\varepsilon$ of $G$ on $B$ such that $\pi$ is $\delta-\varepsilon$ equivariant.
Then $\pi: (A, \delta)\to (B, \varepsilon)$ is a surjective morphism of coactions. Hence $\pi$
is an isomorphism by Proposition~\ref{GIUTFB}, part~\eqref{it:epsilon-max}.
\end{proof}

The following is parallel to \corref{cor:giut-max-coact}:

\begin{cor}[Abstract GIUT for normal coactions]
\label{cor:giut-nor-coact} Let $(B, \varepsilon)$ be a normal coaction and
$\pi:A\to B$  a surjective homomorphism. Then $\pi$ is
injective if and only if there is a normal
coaction $\delta$ of $G$ on $A$ such that $\pi$ is $\delta-\varepsilon$ equivariant and
$\pi\vert_{A_e}$ is injective.
\end{cor}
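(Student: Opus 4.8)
The plan is to follow the proof of the parallel statement \corref{cor:giut-max-coact} almost verbatim, substituting part~\eqref{it:delta-normal} of \propref{GIUTFB} for part~\eqref{it:epsilon-max}. The point is that all of the genuine work has already been carried out in \propref{GIUTFB}, so the corollary is a short deduction that merely selects the normal branch of that proposition's conclusion.

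For the forward implication I would argue that it is immediate. If $\pi$ is injective then, being already surjective, it is an isomorphism of $C^*$-algebras, and I can transport $\varepsilon$ back along $\pi$ by setting
\[
\delta:=(\pi^{-1}\otimes\id)\circ\varepsilon\circ\pi.
\]
This is a coaction on $A$ for which $\pi$ is $\delta-\varepsilon$ equivariant by construction, and $\pi|_{A_e}$ is trivially injective. Since normality of a coaction is preserved under isomorphism and $\varepsilon$ is normal, $\delta$ is normal as well.

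For the reverse implication I would assume there is a normal coaction $\delta$ on $A$ such that $\pi$ is $\delta-\varepsilon$ equivariant and $\pi|_{A_e}$ is injective. Then $\pi:(A,\delta)\to(B,\varepsilon)$ is a surjective morphism of coactions with $\pi|_{A_e}$ injective, which is precisely the hypothesis of \propref{GIUTFB}. Since $\delta$ is normal, part~\eqref{it:delta-normal} of that proposition yields at once that $\pi$ is an isomorphism, hence in particular injective.

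I do not anticipate any genuine obstacle, since the entire content resides in \propref{GIUTFB}. The only point requiring a moment's care is in the forward direction, namely verifying that the pulled-back $\delta$ is a bona fide normal coaction; but this is routine, as pulling back along an isomorphism preserves the coaction identity, nondegeneracy, and normality.
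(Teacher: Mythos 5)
Your proposal is correct and coincides with the paper's own proof: the reverse direction is exactly the appeal to \propref{GIUTFB}, part~\eqref{it:delta-normal}, applied to the surjective morphism $\pi:(A,\delta)\to(B,\varepsilon)$, and the forward direction, which the paper dismisses as immediate, is just your pullback of $\varepsilon$ along the isomorphism $\pi$. No gaps; your extra care in checking that the pulled-back coaction is normal is a harmless elaboration of the same argument.
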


\begin{proof} The forward direction is immediate. Assume now that there is a normal coaction
$\delta$ of $G$ on $A$ such that $\pi$ is $\delta-\varepsilon$ equivariant and
$\pi\vert_{A_e}$ is injective.
Then $\pi: (A, \delta)\to (B, \varepsilon)$ is a surjective morphism of coactions. Hence $\pi$
is an isomorphism by Proposition~\ref{GIUTFB}, part~\eqref{it:delta-normal}.
\end{proof}

\begin{cor}\label{cor:characterise-normal} Let $(A, \delta)$ be a coaction. The following are equivalent:
\begin{enumerate}
\item\label{it:delta-normal-cor} $\delta$ is normal;
\item\label{it:general-giut} A surjective homomorphism $\pi:A\to B$ is injective if and only if $\pi\vert_{A_e}$ is injective
and
there
is a coaction $\varepsilon$ on $B$ such that $\pi$ is $\delta-\varepsilon$ equivariant.
\end{enumerate}
\end{cor}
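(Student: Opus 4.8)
The plan is to prove the two implications separately, using Proposition~\ref{GIUTFB} for \eqref{it:delta-normal-cor}$\Rightarrow$\eqref{it:general-giut} and testing the uniqueness property on the normalization map for the converse.

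First I would treat \eqref{it:delta-normal-cor}$\Rightarrow$\eqref{it:general-giut}, assuming $\delta$ normal. The forward direction of the equivalence asserted in \eqref{it:general-giut} is immediate: if the surjection $\pi$ is injective then it is an isomorphism, so $\pi\vert_{A_e}$ is injective, and transporting $\delta$ through $\pi$ via $\varepsilon=(\pi\otimes\id)\circ\delta\circ\pi^{-1}$ produces a coaction $\varepsilon$ on $B$ for which $\pi$ is equivariant. For the reverse direction I would observe that the hypotheses make $\pi:(A,\delta)\to(B,\varepsilon)$ a surjective morphism of coactions with $\pi\vert_{A_e}$ injective, whence Proposition~\ref{GIUTFB}, part~\eqref{it:delta-normal}, which applies because $\delta$ is normal, yields that $\pi$ is an isomorphism, in particular injective.

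For the converse \eqref{it:general-giut}$\Rightarrow$\eqref{it:delta-normal-cor}, I would apply the uniqueness property \eqref{it:general-giut} to the canonical normalization $q_A^n:(A,\delta)\to(A^n,\delta^n)$. This map is surjective and $\delta-\delta^n$ equivariant, with $\delta^n$ a coaction on $A^n$; and because a normalization carries the fixed-point algebra $A_e$ isomorphically onto $(A^n)^{\delta^n}$, the restriction $q_A^n\vert_{A_e}$ is injective. Hence \eqref{it:general-giut} forces $q_A^n$ to be injective, and since it is already surjective it is an isomorphism. As $(A^n,\delta^n)$ is normal by construction, this isomorphism shows $\delta$ is normal.

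There is no serious obstacle here; the work is entirely in marshalling Proposition~\ref{GIUTFB} and the properties of the normalization functor recalled in the preliminaries. The one point worth stating explicitly is that the normalization map $q_A^n$ is an isomorphism precisely when $\delta$ is normal, which follows from the uniqueness of normalizations, since the identity map is a normalization of any normal coaction.
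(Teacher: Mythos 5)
Your proof is correct and takes essentially the same route as the paper's: the implication \eqref{it:delta-normal-cor}$\Rightarrow$\eqref{it:general-giut} via Proposition~\ref{GIUTFB}, part~\eqref{it:delta-normal}, and the converse by testing the uniqueness property on the normalization map $q_A^n$, using that normalizations restrict isomorphically to fixed-point algebras. Your closing remark justifying the final step (the identity is a normalization of any normal coaction, and normalizations are unique up to isomorphism) is a slightly more explicit version of the paper's one-line conclusion, but the argument is the same.
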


\begin{proof}
Assume \eqref{it:delta-normal-cor}. Let $\pi:A\to B$ be an isomorphism. Then trivially $\pi\vert_{A_e}$ is injective and $\pi$ carries
$\delta$ to a (normal) coaction on $B$. If on the other hand $\pi:A\to B$ is surjective, $\pi\vert_{A_e}$ is injective, and
$B$ carries a coaction $\varepsilon$ such that $\pi$ is $\delta-\varepsilon$ equivariant, then by Proposition~\ref{GIUTFB},
part~\eqref{it:delta-normal} $\pi$ is an isomorphism. This proves \eqref{it:delta-normal-cor}$\Rightarrow$\eqref{it:general-giut}.

Now assume \eqref{it:general-giut}. Since the normalization map $q_A^n:(A, \delta)\to (A^\normal, \delta^\normal)$
is equivariant and satisfies $q_A^n\vert_{A_e}$ is injective, by hypothesis $q_A^n$ is injective. Hence it is an isomorphism,
so $\delta$ is normal since $\delta^n$ is.
\end{proof}

The following is parallel to \corref{cor:characterise-normal}:

\begin{cor}\label{cor:characterise-max} Let $(B, \varepsilon)$ be a coaction. The following are equivalent:
\begin{enumerate}
\item\label{it:epsilon-max-cor} $\varepsilon$ is maximal;
\item\label{it:general-giut-2} A surjective homomorphism $\pi:A\to B$ is injective if and only if there
is a coaction $\delta$ on $A$ such that $\pi$ is $\delta-\varepsilon$ equivariant and
$\pi\vert_{A_e}$ is injective.
\end{enumerate}
\end{cor}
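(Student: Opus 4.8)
The plan is to dualize the proof of \corref{cor:characterise-normal}, interchanging the roles of \emph{maximal} and \emph{normal} and of source and target: where that argument used the normalization map and part~\eqref{it:delta-normal} of \propref{GIUTFB}, I will instead use the maximalization map and part~\eqref{it:epsilon-max}. Since $(B,\varepsilon)$ is now the \emph{fixed} coaction and the surjective homomorphisms run $\pi:A\to B$ into it, everything lines up with part~\eqref{it:epsilon-max}, whose hypothesis is exactly that $\varepsilon$ be maximal.

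For \eqref{it:epsilon-max-cor}$\Rightarrow$\eqref{it:general-giut-2}, I would assume $\varepsilon$ is maximal and let $\pi:A\to B$ be surjective. If $\pi$ is injective, then it is an isomorphism, and transporting $\varepsilon$ through $\pi^{-1}$ produces a (maximal) coaction $\delta$ on $A$ for which $\pi$ is $\delta-\varepsilon$ equivariant with $\pi|_{A_e}$ trivially injective; this yields the forward half of the stated equivalence. Conversely, if there is a coaction $\delta$ on $A$ making $\pi$ equivariant with $\pi|_{A_e}$ injective, then $\pi:(A,\delta)\to(B,\varepsilon)$ is a surjective morphism of coactions, and since $\varepsilon$ is maximal, \propref{GIUTFB}\eqref{it:epsilon-max} forces $\pi$ to be an isomorphism, in particular injective.

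For \eqref{it:general-giut-2}$\Rightarrow$\eqref{it:epsilon-max-cor}, the key step is to feed the maximalization $q_B^m:(B^m,\varepsilon^m)\to(B,\varepsilon)$ into the hypothesis. This map is surjective, is $\varepsilon^m-\varepsilon$ equivariant, and (being a maximalization) restricts to an isomorphism on fixed-point algebras, so $q_B^m|_{(B^m)_e}$ is injective. Instantiating \eqref{it:general-giut-2} with $A=B^m$, $\delta=\varepsilon^m$, and $\pi=q_B^m$ then shows that $q_B^m$ is injective, hence an isomorphism of coactions; since $\varepsilon^m$ is maximal and maximality is preserved under isomorphism, $\varepsilon$ is maximal.

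I expect no analytic difficulty here: every ingredient is already available from \propref{GIUTFB} and the maximalization/normalization machinery recalled in the preliminaries, and the argument is essentially a mirror image of \corref{cor:characterise-normal}. The one point requiring care — and the natural place to slip — is the bookkeeping of the duality: because the fixed coaction now sits on the \emph{target} $B$ rather than the source, I must invoke part~\eqref{it:epsilon-max} rather than part~\eqref{it:delta-normal}, and I must apply the hypothesis to the maximalization $q_B^m$ of $B$ rather than to a normalization of $A$.
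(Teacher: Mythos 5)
Your proposal is correct and takes essentially the same route as the paper's own proof: the forward implication handles the trivial direction by transporting $\varepsilon$ through $\pi^{-1}$ and settles the converse via Proposition~\ref{GIUTFB}, part~\eqref{it:epsilon-max}, while the reverse implication tests hypothesis~\eqref{it:general-giut-2} on the maximalization map $q_B^m:(B^m,\varepsilon^m)\to(B,\varepsilon)$, using that maximalizations are surjective and injective on fixed-point algebras. Your ``bookkeeping of the duality'' remark is exactly the point, and your argument mirrors the paper's line for line.
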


\begin{proof}
Assume \eqref{it:epsilon-max-cor}. Let $\pi:A\to B$ be an isomorphism. Then trivially
$\pi\inv$ carries $\varepsilon$ to a (maximal) coaction on $A$ and
$\pi\vert_{A_e}$ is injective. If on the other hand $\pi:A\to B$ is surjective,
$A$ carries a coaction $\delta$ such that $\pi$ is $\delta-\varepsilon$ equivariant and $\pi\vert_{A_e}$ is injective,
then by Proposition~\ref{GIUTFB},
part~\eqref{it:epsilon-max} $\pi$ is an isomorphism. This proves \eqref{it:epsilon-max-cor}$\Rightarrow$\eqref{it:general-giut-2}.

Now assume \eqref{it:general-giut-2}. Since the maximalization map $q_B^m:(B^m, \varepsilon^m)\to (B, \varepsilon)$
is equivariant and satisfies $q_B^m\vert_{B^m_e}$ is injective, by hypothesis $q_B^m$ is injective. Hence it is an isomorphism,
so $\varepsilon$ is maximal since $\varepsilon^m$ is.
\end{proof}


\begin{thebibliography}{EKQR06}

\bibitem[BKQ11]{BKQ}
E.~B{\'e}dos, S.~Kaliszewski, and J.~Quigg, \emph{Reflective-coreflective
  equivalence}, Theory Appl. Categ. \textbf{25} (2011), 142--179.

\bibitem[BaHLR]{BanHLR}
N.~Brownlowe, A.~an~Huef, M.~Laca, and I.~Raeburn, \emph{Boundary quotients of
  the {T}oeplitz algebra of the affine semigroup over the natural numbers},
  Ergodic Theory \& Dynam. Systems, (1) \textbf{32} (2012), 35--62.

\bibitem[CLSV]{CLSV}
T.M. Carlsen, N.S. Larsen, A.~Sims, and S.T. Vittadello, \emph{Co-universal
  algebras associated to product systems, and gauge-invariant uniqueness
  theorems}, Proc. London Math. Soc., (4) \textbf{103} (2011), 563--600.

\bibitem[Cob67]{Co}
L.A. Coburn, \emph{The ${C}^*$-algebra generated by an isometry {I}}, Bull.
  Amer. Math. Soc. \textbf{73} (1967), 722--726.

\bibitem[CL02]{CLac1}
J.~Crisp and M.~Laca, \emph{On the {T}oeplitz algebras of right-angled and
  finite-type {A}rtin groups}, J. Austral. Math. Soc. \textbf{72} (2002),
  223--245.

\bibitem[Cun77]{Cu}
J.~Cuntz, \emph{Simple ${C}^*$-algebras generated by isometries}, Comm. Math.
  Phys. \textbf{57} (1977), 173--185.

\bibitem[EKQ04]{EKQ}
S.~Echterhoff, S.~Kaliszewski, and J.~Quigg, \emph{Maximal coactions},
  Internat. J. Math. \textbf{15} (2004), 47--61.

\bibitem[EKQR06]{BE}
S.~Echterhoff, S.~Kaliszewski, J.~Quigg, and I.~Raeburn, \emph{{A Categorical
  Approach to Imprimitivity Theorems for C*-Dynamical Systems}}, vol. 180, Mem.
  Amer. Math. Soc., no. 850, American Mathematical Society, Providence, RI,
  2006.

\bibitem[EQ99]{EQ}
S.~Echterhoff and J.~Quigg, \emph{Induced coactions of discrete groups on
  {$C^*$}-algebras}, Canad. J. Math. \textbf{51} (1999), 745--770.

\bibitem[Exe97]{E}
R.~Exel, \emph{Amenability for {F}ell bundles}, J. Reine Angew. Math.
  \textbf{492} (1997), 41--73.

\bibitem[Fow02]{F}
N.~J. Fowler, \emph{Discrete product systems of {H}ilbert bimodules}, Pacific
  J. Math. \textbf{204} (2002), 335--375.

\bibitem[aHR97]{anHR}
A.~an~Huef and I.~Raeburn, \emph{The ideal structure of {C}untz-{K}rieger
  algebras}, Ergodic Theory \& Dynam. Systems \textbf{17} (1997), 611--624.

\bibitem[Kat04]{Ka}
T.~Katsura, \emph{On {$C^*$}-algebras associated with {$C^*$}-correspondences},
  J. Funct. Anal. \textbf{217} (2004), 366--401.

\bibitem[KQ09]{clda}
S.~Kaliszewski and J.~Quigg, \emph{Categorical {L}andstad duality for actions},
  Indiana Univ. Math. J. \textbf{58} (2009), no.~1, 415--441.

\bibitem[KQ10]{nordfjordeid}
\bysame, \emph{Categorical perspectives on noncommutative duality}, Summer
  school on C*-algebras and their interplay with dynamical systems, June 2010,
  http://www.ntnu.no/imf/english/research/fa/oa/summerschool2010.

\bibitem[Lac99]{Lac1}
M.~Laca, \emph{Purely infinite simple {T}oeplitz algebras}, J. Operator Theory
  \textbf{41} (1999), 421--435.

\bibitem[LR96]{LacR1}
M.~Laca and I.~Raeburn, \emph{{Semigroup crossed products and {T}oeplitz
  algebras of nonabelian groups}}, J. Funct. Anal. \textbf{139} (1996),
  415--440.

\bibitem[Nic92]{N}
A.~Nica, \emph{{{$C^*$}-algebras generated by isometries and {W}iener-{H}opf
  operators}}, J. Operator Theory \textbf{27} (1992), 17--52.

\bibitem[Pim97]{P}
M.V. Pimsner, \emph{A class of {$C^*$}-algebras generalizing both
  {C}untz-{K}rieger algebras and crossed products by {${\bf Z}$}}, Free
  probability theory(Waterloo, ON, 1995), Amer. Math. Soc., Providence, RI,
  1997, pp.~189--212.

\bibitem[Qui94]{QuiggFull}
J.~C. Quigg, \emph{{Full and reduced {$C^*$}-coactions}}, Math. Proc. Camb.
  Phil. Soc. \textbf{116} (1994), 435--450.

\bibitem[Qui96]{QuiggDiscrete}
\bysame, \emph{{Discrete {$C^*$}-coactions and {$C^*$}-algebraic bundles}}, J.
  Austral. Math. Soc. Ser. A \textbf{60} (1996), 204--221.

\bibitem[QR97]{QuiggRa}
J.~Quigg and I.~Raeburn, \emph{{Characterizations of crossed products by
  partial actions}}, J. Operator Theory \textbf{37} (1997), 311--340.

\bibitem[SY10]{SY}
A.~Sims and T.~Yeend, \emph{{$C^*$}-algebras associated to product systems of
  {H}ilbert bimodules}, J. Operator Theory \textbf{64} (2010), 349--376.

\end{thebibliography}
\providecommand{\bysame}{\leavevmode\hbox to3em{\hrulefill}\thinspace}
\providecommand{\MR}{\relax\ifhmode\unskip\space\fi MR }
\providecommand{\MRhref}[2]{%
  \href{http://www.ams.org/mathscinet-getitem?mr=#1}{#2}
}
\providecommand{\href}[2]{#2}

\end{document}